\newcommand\q{\enquote}
\newcommand{\jgc}[1]{{\color{blue} #1}}            
\newcommand \K   {\mathcal{K}}
\newcommand \Kinf{\mathcal{K_\infty}}
\newcommand \qiq   {\quad\Iff\quad}
\newcommand \Iff   {\Leftrightarrow}
\newcommand{\C}{\mathbb{C}}
\newcommand{\N}{\mathbb{N}}
\newcommand{\R}{\mathbb{R}}
\newcommand{\Z}{\mathbb{Z}}
\newcommand{\calK}{\mathcal{K}}
\newcommand{\calL}{\mathcal{L}}
\newcommand{\calT}{\mathcal{T}}
\DeclareMathOperator{\id}{id} 
\DeclareMathOperator{\dist}{dist} 
\newcommand{\argument}{\mathord{\,\cdot\,}} 
\newcommand{\norm}[1]{\left\lVert #1 \right\rVert} 
\newcommand{\modulus}[1]{\left\lvert #1 \right\rvert} 
\newcommand{\intt}{{\rm int}\,}
\newcommand{\spr}{r} 
\newcommand{\essSpr}{r_{\operatorname{ess}}} 
\newcommand{\Implies}[2]{``(\ref{#1}) $\Rightarrow$ (\ref{#2})''}
\newcommand{\lel}{\left\langle}
\newcommand{\rir}{\right\rangle}
\newcommand{\scalp}[2]{ \lel #1, #2 \rir }
\theoremstyle{definition}
\newtheorem{definition}{Definition}[section]
\newtheorem{remark}[definition]{Remark}
\newtheorem{remarks}[definition]{Remarks}
\newtheorem{example}[definition]{Example}
\newtheorem{examples}[definition]{Examples}
\theoremstyle{plain}
\newtheorem{proposition}[definition]{Proposition}
\newtheorem{lemma}[definition]{Lemma}
\newtheorem{theorem}[definition]{Theorem}
\numberwithin{equation}{section}
\begin{document}

\title[Stability of positive linear systems]{Stability criteria for positive linear discrete-time systems}

\author{Jochen Gl\"uck}
\address{Faculty of Computer Science and Mathematics, University of
  Passau, Germany}
\curraddr{}
\email{jochen.glueck@uni-passau.de}

\author{Andrii Mironchenko}
\address{Faculty of Computer Science and Mathematics, University of
  Passau, Germany}
\curraddr{}
\email{andrii.mironchenko@uni-passau.de}

\subjclass[2010]{47B65, 39A06, 39A30, 93C55, 93D05}

\keywords{positive systems, discrete-time systems, stability, small-gain condition, linear systems}
\date{\today}
\begin{abstract}
	We prove new characterisations of exponential stability for positive linear discrete-time systems in ordered Banach spaces, in terms of small-gain conditions. Such conditions have played an important role in the finite-dimensional systems theory, but are relatively unexplored in the infinite-dimen\-sional setting, yet. 
	
	Our results are applicable to discrete-time systems in ordered Banach spaces that have a normal and generating positive cone. Moreover, we show that our stability criteria can be considerably simplified if the cone has non-empty interior or if the operator under consideration is quasi-compact.

	To place our results into context we include an overview of known stability criteria for linear (and not necessarily positive) operators and provide full proofs for several folklore characterizations from this domain.
\end{abstract}

\maketitle

\section{Introduction} \label{section:introduction}

Positive systems occur frequently in the modeling, analysis and control of dynamical systems, for instance, in chemical engineering, compartmental systems and ecological systems \cite{FaR00}.
Besides being interesting in their own right, positive systems have been instrumental in establishing stability properties of control systems, which are not positive per se.
In particular, within the small-gain approach \cite{DRW07,DRW10}, the stability criteria for large-scale interconnected systems are given in terms of stability of certain discrete-time monotone systems.
The guiding lights for these results are the well-known characterizations of exponential stability of finite-dimensional linear discrete-time systems in terms of \q{small-gain} and \q{no-increase} conditions, which are summarized, e.g., in \cite[Lemma 2.0.1]{Rue07}. A decisive tool for the proof of such criteria is the celebrated Perron--Frobenius theorem. 

For infinite-dimensional systems, set in the realm of ordered Banach spaces, the situation is more complex.
Stability of positive and non-positive linear operators and semigroups has already been studied for many decades, as is documented for instance in the monographs \cite{vanNeerven1996, Emelyanov2007, Eisner2010}, as well as in the chapters about the asymptotic behaviour of $C_0$-semigroups in \cite{Nagel1986, EngelNagel2000, BatkaiKramarRhandi2017}. Yet, many stability criteria that turned out to be useful guidelines in the finite-dimensional path from the linear to the non-linear case -- in particular, so-called small-gain theorems and non-increasing conditions -- have only been partially explored in the infinite-dimensional setting so far (although see \cite[Theorem~3.13 on p.\,120 and Theorem~4.6 on p.\,130]{Kra64}, where no-increase conditions have been used for the study of fixed points of monotone operators). This has several reasons.
On the one hand, the Krein--Rutman theorem, which is a (partial) infinite-dimensional extension of the Perron--Frobenius theorem, requires the operator under assumption to be quasi-compact, which is a rather strong assumption.
On the other hand, many finite-dimensional stability notions exhibit -- due to the compactness of the closed unit ball -- some kind of intrinsic uniformity, which is essential to characterize properties such as exponential stability. Lack of this uniformity in the infinite-dimensional setting breaks most of the known finite-dimensional criteria.

\subsection*{Contribution}

In this paper, we study discrete-time systems, playing a prominent role in the modeling, analysis, control and numerics of dynamical systems \cite{Aga00, Las12}. 
In Section~\ref{subsection:positive-discrete:stability}, we introduce several novel stability properties, most notably the uniform small-gain condition, and characterize the exponential stability of positive linear discrete-time systems in terms of such properties (Theorem~\ref{subsection:positive-discrete:stability}). Our assumptions on the ordered Banach space are not particularly restrictive; we merely assume that the cone is normal and generating, which is satisfied for many important classes of spaces.
Some, though not all, of the equivalences in Theorem~\ref{subsection:positive-discrete:stability} have been shown in \cite{MKG20} to hold even in the non-linear case; these results are related to the input-to-state stability of control systems with inputs and to so-called small-gain theorems.

In Section~\ref{subsection:positive-discrete:interior-points} we devote special attention to ordered Banach spaces whose cone has non-empty interior.
Subeigenvectors of operators inducing discrete-time positive exponentially stable systems are a key to the construction of Lyapunov functions for networks of stable systems, see \cite{DRW10} for the case of finite networks. Our results in Section~\ref{subsection:positive-discrete:interior-points} pave the way to extending these results to infinite networks; some results of this type have already been put to use \cite{KMS19}.

In Section~\ref{subsection:positive-discrete:quasi-compact} we treat systems given by quasi-compact operators.
By using the Krein--Rutman theorem for these operators, we show that the assumption of quasi-compactness allows us to extend most of finite-dimensional results to the infinite-dimensional setting.

To provide our results on positive systems with sufficient context, we include a small survey part in this article which is given in Section~\ref{section:discrete-time-tour}. There we discuss several results on uniform stability of (non-positive) linear systems which are scattered throughout the literature; we also include various references which point the reader to further related results.

Of particular importance is Theorem~\ref{thm:discrite-time-convolution} which collects several characterizations of uniform stability which are probably known to experts in operator theory or in infinite-dimensional system theory, but which we could not find in an explicit form in the literature. 

This paper is intended both for functional analysts working in the field of positive operators and systems, as well as for experts in dynamical systems and control theory. To make the paper easier accessible for all interested readers we discuss all relevant notions and properties from the theory of ordered Banach spaces in Section~\ref{section:ordered-banach-spaces}.

Moreover, to give those readers who are unfamiliar with the details of ordered Banach spaces a good intuition for the various concepts that we use, we discuss of several important examples of ordered Banach spaces.

We concentrate ourselves solely on linear systems. For a survey on the theory of nonlinear monotone discrete-time infinite-dimensional systems we refer to \cite{HiS05} and \cite[Section 5]{HiS05b}. 
Some early results have been presented in \cite[Chapter 1]{Hes91}. 

Nonlinear versions of small-gain type conditions studied in this paper have been applied for the analysis of nonlinear systems in \cite{MKG20}. 
In \cite{GlM21} characterizations for the negativity of the spectral bound of resolvent positive operators have been studied, which are continuous-time counterparts of the results studied in this paper.
Some of the results in Section~\ref{subsection:positive-discrete:interior-points} have been extended to the case of homogeneous and subadditive operators in \cite{MNK21} and applied for the construction of ISS Lyapunov functions for infinite networks of input-to-state stable systems with homogeneous and subadditive gain operators.
Nevertheless, in the nonlinear case many problems remain open.

\subsection*{Notation}

We use the conventions $\N = \{1,2,3,\dots\}$ and $\Z_+ = \{0,1,2,\dots\}$.

If $X$ is a Banach space, we denote the space of bounded linear operators on $X$ by $\calL(X)$ and we denote the dual space -- i.e., the space of bounded linear functionals on $X$ -- by $X'$. For $x' \in X'$ and $x \in X$ we use the common notation $\langle x', x \rangle := x'(x)$. The identity operator on a Banach space will be denoted by $\id$ (if the space is clear from the context).

If the underlying scalar field of the Banach space $X$ is complex, we denote the \emph{spectrum} an operator $T \in \calL(X)$ by $\sigma(T)$. The \emph{spectral radius} of $T$ is denoted by
\begin{align*}
	\spr(T) := \sup \{\modulus{\lambda}: \, \lambda \in \sigma(T)\} \in [0,\infty).
\end{align*}
If $\lambda \in \C$ is located in the complement of the spectrum (i.e., in the so-called \emph{resolvent set} of $T$), the operator $(\lambda \id - T)^{-1}$ is called the \emph{resolvent} of $T$ at $\lambda$.

For bounded linear operators on real Banach spaces, spectral properties are defined by means of \emph{complexification}; for details, we refer to the subsection on complexifications at the end of Section~\ref{section:ordered-banach-spaces}.

\section{Setting the stage: ordered Banach spaces and positive operators}
\label{section:ordered-banach-spaces}

In this section, we recall some background information on ordered Banach spaces that will be needed throughout the article.

\subsection*{Ordered Banach spaces} 
By an \emph{ordered Banach space} we mean a pair $(X,X^+)$ where $X$ is a real Banach space, and $X^+ \subseteq X$ is a non-empty closed set such that $\alpha X^+ + \beta X^+ \subseteq X^+$ for all scalars $\alpha, \beta \ge 0$ and such that $X^+ \cap (-X^+) = \{0\}$. The set $X^+$ is called the \emph{positive cone} in $X$.

The positive cone of an ordered Banach space $(X,X^+)$ induces a partial order $\le$ on $X$ which is given by $x \le y$ if and only if $y-x \in X^+$. In particular, $x \ge 0$ if and only if $x \in X^+$. The partial order $\le$ is compatible with addition and with multiplication by scalars $\alpha \ge 0$.

For any two vectors $x,z$ in an ordered Banach space we call the set $[x,z] := \{y \in X: \, x \le y \le z\}$ the \emph{order interval} between $x$ and $z$; this order interval is non-empty if and only if $x \le z$.

\subsection*{Generating and normal cones} 
Let $(X,X^+)$ be an ordered Banach space. The cone $X^+$ is called \emph{total} (or \emph{spatial}) if the vector subspace $X^+ - X^+ = \{x-y:x,y\in X^+\}$ of $X$ is dense in $X$. The cone is called \emph{generating} (or the space $X$ is called \emph{directed}) if we even have $X^+ - X^+ = X$. In other words, the cone is generating if and only if each vector $x \in X$ can be decomposed as $x = y-z$ for two vectors $y,z \in X^+$. In fact, this decomposition can always be done in a way that controls the norms of $y$ and $z$: 

If the cone $X^+$ is generating, then there exists a number $M > 0$ with the following property: for each $x \in X$ there exist $y,z \in X^+$ such that
\begin{align}
	\label{eq:bounded-decomposition}
	x = y-z \qquad \text{and} \qquad \norm{y}, \norm{z} \le M \norm{x};
\end{align}
see for instance \cite[Theorem~2.37(1) and~(3)]{AliprantisTourky2007}. The cone $X^+$ is called \emph{normal} if there exists a number $C > 0$ such that we have
\begin{align}
	\label{eq:normality-constant}
	\norm{x} \le C \norm{y} \qquad \text{whenever } 0 \le x \le y
\end{align}
in $X$. The cone is normal if and only if there exists an equivalent norm $\norm{\argument}'$ on $X$ which is \emph{monotone} in the sense that $\norm{x}' \le \norm{y}'$ whenever $0 \le x \le y$; see for instance \cite[Theorem~2.38(1) and~(2)]{AliprantisTourky2007}.

Finally, for each set $S\subset X$ denote by $\intt(S)$ the \emph{topological interior} of $S$. If $\intt(X^+)\neq\emptyset$, then we say that the cone $X^+$ has a non-empty interior. We shall have more to say about cones with non-empty interior in Section~\ref{subsection:positive-discrete:interior-points}, in particular in Proposition~\ref{prop:order-units}. Here we only mention that a cone with non-empty interior is automatically generating (Proposition~\ref{prop:order-units}(\ref{prop:order-units:itm:interior-point}) and~(\ref{prop:order-units:itm:order-unit-variant})).

\subsection*{Banach lattices}

A particularly well-behaved class of ordered Banach spaces is the class of so-called \emph{Banach lattices} \cite{Schaefer1974, Meyer-Nieberg1991}. An ordered Banach space $(X,X^+)$ is said to be \emph{lattice-ordered} if each two elements $x,y \in X$ have a smallest upper bound -- i.e., a so-called \emph{supremum} -- in $X$, which we then denote by $x \lor y$. This allows us to define the \emph{positive part}, the \emph{negative part} and the \emph{modulus} of a vector $x \in X$ by means of the formulae
\begin{align*}
	x^+  := x \lor 0, \qquad
	x^-  := (-x) \lor 0, \qquad
	\modulus{x}  := x \lor (-x).
\end{align*}
The cone in a lattice-ordered Banach space is always generating since we have $x = x^+ - x^-$ for each $x \in X$.

A \emph{Banach lattice} is a lattice-ordered Banach space which satisfies an additional compatibility assumption between norm and lattice operations, namely
\begin{align}
	\label{eq:compatibility-in-banach-lattices}
	\modulus{x} \le \modulus{y} \quad \Rightarrow \quad \norm{x} \le \norm{y}
\end{align}
for all $x,y \in X$. This implies, in particular, that the cone in a Banach lattice is always normal.

\subsection*{The distance to the cone}

For a subset $S$ and a vector $x$ in a Banach space $X$, we denote by
\begin{align*}
	\dist(x,S) := \inf\left\{ \norm{x-y}: \, y \in S \right\}
\end{align*}
the distance from $x$ to $S$. If $(X,X^+)$ is an ordered Banach space, we will often need the distance of points to the positive cone $X^+$. Due to the specific properties of cones, the distance function $\dist(\argument,X^+)$ is quite nicely behaved; in particular, it is not difficult to see that we have for all $x,y \in X$ and all $\alpha \in [0,\infty)$
\begin{align*}
	& \dist(x+y,X^+) \le \dist(x,X^+) + \dist(y,X^+), \\
	\text{and} \qquad & \dist(\alpha x,X^+) = \alpha \dist(x,X^+).
\end{align*}

\subsection*{Examples of ordered Banach spaces}

Here we recall a few examples of ordered Banach spaces for the convenience of readers that are not too familiar with this concept. Classical sequence and function spaces constitute several important classes of ordered Banach spaces, as explained in the following three examples.

\begin{examples}[Sequence spaces]
	\label{ex:sequence-spaces}
	\begin{enumerate}[(a)]
		\item\label{ex:sequence-spaces:itm:ell_p} 
Let $X = \ell_p := \{x \in \R^{\N}: \, \|x\|_{\ell_p} <\infty\}$ for $p \in [1,\infty]$, with the norms  $x = (x_n)_{n \in \N}\mapsto \|x\|_{\ell_p} := \Big(\sum_{n=1}^\infty |x_n|^p\Big)^{1/p}$ for $p<\infty$ and $x\mapsto \|x\|_{\ell_\infty}:=\sup_{n=1}^\infty |x_n|$.
				We endow $\ell_p$ spaces with the cone		
		\begin{align*}
			\ell_p^+ := \{(x_n)_{n \in \Z_+} \in \ell_p: \, x_n \ge 0 \text{ for all } n \in \N\}.
		\end{align*}
		Then $(\ell_p,\ell_p^+)$ is an ordered Banach space, and in fact even a Banach lattice (where the lattice operations can be performed entrywise); in particular, the cone $\ell_p^+$ is generating and normal. If $p = \infty$, then the cone has non-empty interior, whereas $\intt(\ell_p^+)=\emptyset$ for $p \in [1,\infty)$.
		
		\item\label{ex:sequence-spaces:itm:c-and-c_0} Let $X = c$ (the space of convergent real sequences) or $X = c_0$ (the space of real sequences that converge to $0$), endowed with the supremum norm and with the cone $X^+$ which is defined in the same way as in Example~\eqref{ex:sequence-spaces:itm:ell_p}. Then $(X,X^+)$ is an ordered Banach space and even a Banach lattice (with entrywise lattice operations) and thus, the cone $X^+$ is generating and normal. The interior of the cone is non-empty if $X = c$ and empty if $X = c_0$.
	\end{enumerate}
\end{examples}

\begin{example}[Spaces of integrable functions]
	\label{ex:integrable-functions}
	If $(\Omega,\mu)$ is an arbitrary measure space and $p \in [1,\infty]$, then the space $L^p(\Omega,\mu)$ over $\R$, endowed with the $p$-norm and the cone of all functions that are $\ge 0$ almost everywhere, is an ordered Banach space and actually even a Banach lattice (where the lattice operations are computed pointwise almost everywhere); hence, the cone is generating and normal.
	
	The interior of the cone is non-empty if $p=\infty$ or if $L^p(\Omega,\mu)$ is finite-dimensional; in all other cases, it is empty.
\end{example}

\begin{example}[Spaces of continuous functions]
	\label{ex:continuous-functions}
	Here are two more examples of Banach lattices (and thus, in particular, of ordered Banach spaces with generating and normal cone):
	
	If $\Omega$ is a topological space, then the space $C_b(\Omega)$ of all real-valued and bounded continuous functions is Banach lattice when endowed with the supremum norm and the cone of all those functions in $C_b(\Omega)$ that are $\ge 0$ everywhere on $\Omega$. The cone in this space has non-empty interior.
	
	Similarly, if $L$ is a locally compact Hausdorff space and $C_0(L)$ denotes that closed subspace of $C_b(L)$ consisting of functions that vanish at infinity, then $C_0(L)$ is a Banach lattice with respect to the norm and cone inherited from $C_b(L)$. The interior of the cone in $C_0(L)$ is non-empty if and only if $L$ is compact (in which we have $C_0(L) = C_b(L) = C(L)$, where $C(L)$ denote the space of all continuous real-valued functions on $L$).
\end{example}

A classical example of an ordered Banach space that is not a Banach lattice is the self-adjoint part of a non-commutative $C^*$-algebra (see for instance \cite[Examples~2.15 and~2.16]{GlueckWeber2020} for some additional information about the order structure of such spaces). Non-normal cones can, for example, be found in various spaces of continuously differentiable functions and in Sobolev spaces; see, e.g., \cite[Examples~2.17 and~2.18]{GlueckWeber2020}.

Finally, we give a simple example, taken from \cite[pp.\,35--36]{KrasnoselskiiLifshitsSobolev1989}, of an ordered Banach space with total but non-generating cone:

\begin{examples}[A space with non-generating cone]
	We consider again the sequence space $c_0$ with the supremum norm, but in contrast to Example~\ref{ex:sequence-spaces}(\ref{ex:sequence-spaces:itm:c-and-c_0}) we now set
	\begin{align*}
		c_0^+ := \left\{ x \in c_0: \; x_0 \ge \left(\sum\nolimits_{k=1}^\infty x_k^2\right)^{1/2} \right\}.
	\end{align*}
	Note that the series on the right might be infinite, in which case the inequality is not satisfied. The cone $c_0^+$ is closed in $c_0$ by Fatou's lemma. 
	The span of the cone is equal to $\ell_2$, so the cone is total but not generating in $c_0$. Moreover, it is not difficult to show that the cone is normal.

	In contrast to the situation described above, the space $\ell_2$ with the same cone -- which can also be written as 
	\begin{align*}
		\left\{ x \in \ell_2: \; x_0 \ge \left(\sum\nolimits_{k=1}^\infty x_k^2\right)^{1/2} \right\}
	\end{align*}
	and which is sometimes called the \emph{Lorentzian cone} -- is an ordered Banach space whose cone has non-empty interior since, for instance, the vector $(1,0,0,\ldots)$ is an interior point.
\end{examples}

\subsection*{Positive operators} Let $(X,X^+)$ and $(Y,Y^+)$ be ordered Banach spaces. A linear mapping $A: X \to Y$ is called \emph{positive}, which we denote as $A \ge 0$, if $AX^+ \subseteq Y^+$. A linear mapping $A$ is positive if and only if it respects the order relation (i.e.\ $Ax_1 \le Ax_2$ whenever $x_1 \le x_2$).

We are particularly interested in \emph{bounded} linear operators. Interestingly though, this assumption is often redundant: if the cone $X^+$ is generating, then every positive linear operator $A: X \to Y$ is automatically bounded \cite[Theorem~2.32]{AliprantisTourky2007}.

\subsection*{Duality of ordered Banach spaces} 
Let $(X, X^+)$ be an ordered Banach space. The subset
\begin{align*}
	(X')^+ := \{x' \in X': \, \langle x', x \rangle \ge 0 \text{ for all } x \in X^+\}
\end{align*}
of the dual space $X'$ is called the \emph{dual wedge} of $X^+$. The elements of $(X')^+$ are called the \emph{positive functionals} on $X$.

The dual wedge is also closed (even weak${}^*$-closed), convex and invariant with respect to multiplication by non-negative scalars. The dual wedge $(X')^+$ is a cone -- i.e., its intersection with $-(X')^+$  is $\{0\}$ -- if and only if the cone $X^+$ is total in $X$.

Let $(X,X^+)$ be an ordered Banach space with total cone $X^+$; as explained in the previous paragraph, the dual space $(X', (X')^+)$ is then an ordered Banach space, too. The properties of $X^+$ and $(X')^+$ are closely related. For instance, $X^+$ is generating if and only if $(X')^+$ is normal \cite[Theorem~4.6]{KrasnoselskiiLifshitsSobolev1989}; and conversely, $X^+$ is normal if and only if $(X')^+$ is generating \cite[Theorem~4.5]{KrasnoselskiiLifshitsSobolev1989}.

\subsection*{Complexifications} Since we are going to use the spectral theory of positive operators, a word on \emph{complexifications} is in order. The underlying scalar field of an ordered Banach space $(X,X^+)$ is real; but every real Banach space $X$ has a (in general, non-unique) \emph{complexification} $X_\C$, which is a Banach space over $\C$, and each bounded linear operator $T$ between real Banach spaces $X$ and $Y$ can be uniquely extended to a bounded $\C$-linear operator $T_\C: X_\C \to Y_\C$; moreover, this extension always satisfies the norm estimate $\norm{T} \le \norm{T_\C} \le 2 \norm{T}$. Whenever we talk about spectral properties of $T$, we tacitly mean the corresponding spectral properties of the complex extension $T_\C$.
For an overview of complexifications of Banach spaces, we refer for instance to \cite{MunozSarantopoulosTonge1999} and \cite[Appendix~C]{Glueck2016}.

\section{Stability for positive systems}
\label{section:positive-discrete-time-systems}

After a brief description of ordered Banach spaces and positivity, we now focus on the stability analysis of positive linear discrete-time evolution equations.

For a Banach space $X$ and an operator $T \in \calL(X)$, consider the \emph{discrete-time system} induced by $T$,
\begin{align}
	\label{eq:Gamma-discrete-time-system}
	x(k+1) = T x(k) \quad \text{for all } k\in\Z_+.
\end{align}
We are interested in the question whether the solutions to this system converge uniformly to $0$ as $k \to \infty$; this is made precise in the following definition.

\begin{definition}
	\label{def:ISS-UGAS-UGES_discrete_time}
	 For a Banach space $X$ and an operator $T \in \calL(X)$, the system \eqref{eq:Gamma-discrete-time-system} is called
	\begin{enumerate}[(a)]
		\item\label{itm:def::ISS-UGAS-UGES_discrete_time-1}
		\emph{uniformly asymptotically stable}, if there is a sequence of real numbers $0 \le a_k \to 0$ such that
		\begin{align*}
			\norm{T^k x} \le a_k \norm{x} \quad \text{for all } x \in X,\ k\in\Z_+;
		\end{align*}
		
		\item\label{itm:def::ISS-UGAS-UGES_discrete_time-2}
		\emph{uniformly exponentially stable} (or \emph{uniformly power stable}), if there exist real numbers $a \in [0,1)$ and $M>0$ such that
		\begin{align*}
			\norm{T^k x} \leq M a^{k} \norm{x} \quad \text{for all } x \in X \text{ and all } k\in\Z_+;
		\end{align*}
		
		\item\label{itm:def::ISS-UGAS-UGES_discrete_time-3} 
		\emph{uniformly weakly attractive} 
		if, for each $r>0$ and each $\varepsilon>0$, there is a time $\tau$ with the following property: 
		for each $x \in X$ of norm $\norm{x} \leq r$ there is $k\leq \tau$ such that $\norm{T^k x} \leq \varepsilon$.
	\end{enumerate}
\end{definition}

The stability notions \eqref{itm:def::ISS-UGAS-UGES_discrete_time-1} and \eqref{itm:def::ISS-UGAS-UGES_discrete_time-2} from the previous definition can be expressed in terms of operator norms. Indeed, the system~\eqref{eq:Gamma-discrete-time-system} is uniformly asymptotically stable if and only if $\norm{T^k} \to 0$ as $k \to \infty$; and the system~\eqref{eq:Gamma-discrete-time-system} is uniformly exponentially stable if and only if there exist numbers $M > 0$ and $a \in [0,1)$ such that $\norm{T^k} \le M a^k$ holds for all $k \in \Z_+$.

The notion of uniform weak attractivity was introduced in \cite{Mir17a}, motivated by the classical notion of weak attractivity \cite{Bha66, BhS02}. We stress that in some works \q{weak} stability concepts have another meaning, namely  convergence properties in the weak topology on $X$, see, e.g., \cite{Eisner2010}.

For a Banach space $X$ and an operator $T\in \calL(X)$ we denote the spectral radius of (the complexification of) $T$ by $\spr(T)$.
A well-known result in the stability theory of discrete-time systems \eqref{eq:Gamma-discrete-time-system} is the following:

\begin{proposition}
	\label{prop:UGAS_discrete_systems_and_Small-gain_theorem_Operators_with_Order-uniform-conditions}
	Let $X$ be a Banach space and $T \in \calL(X)$. 
	The following assertions are equivalent:
	\begin{enumerate}[\upshape (i)]
		\item \label{itm:Ulin-SGC-criteria-1} We have $\spr(T) < 1$.
		\item\label{itm:Ulin-SGC-criteria-2} The system \eqref{eq:Gamma-discrete-time-system} is uniformly asymptotically stable.
		\item\label{itm:Ulin-SGC-criteria-3} The system \eqref{eq:Gamma-discrete-time-system} is uniformly exponentially stable.
		\item\label{itm:Ulin-SGC-criteria-32} The system \eqref{eq:Gamma-discrete-time-system} is uniformly weakly attractive.
	\end{enumerate}
\end{proposition}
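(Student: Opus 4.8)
The plan is to establish the cycle of implications $(\ref{itm:Ulin-SGC-criteria-3}) \Rightarrow (\ref{itm:Ulin-SGC-criteria-2}) \Rightarrow (\ref{itm:Ulin-SGC-criteria-32}) \Rightarrow (\ref{itm:Ulin-SGC-criteria-1}) \Rightarrow (\ref{itm:Ulin-SGC-criteria-3})$, which is the most economical route. The implication $(\ref{itm:Ulin-SGC-criteria-3}) \Rightarrow (\ref{itm:Ulin-SGC-criteria-2})$ is immediate from the norm characterisations noted just before the proposition: if $\norm{T^k} \le M a^k$ with $a \in [0,1)$, then $a_k := M a^k$ is a null sequence dominating $\norm{T^k}$. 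For $(\ref{itm:Ulin-SGC-criteria-2}) \Rightarrow (\ref{itm:Ulin-SGC-criteria-32})$, suppose $\norm{T^k} \to 0$ and fix $r, \varepsilon > 0$; choose $\tau \in \Z_+$ with $\norm{T^\tau} \le \varepsilon / r$. Then for any $x$ with $\norm{x} \le r$ we get $\norm{T^\tau x} \le \norm{T^\tau}\,\norm{x} \le \varepsilon$, so the defining property of uniform weak attractivity holds with this single time $\tau$.

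The two substantial implications are $(\ref{itm:Ulin-SGC-criteria-32}) \Rightarrow (\ref{itm:Ulin-SGC-criteria-1})$ and $(\ref{itm:Ulin-SGC-criteria-1}) \Rightarrow (\ref{itm:Ulin-SGC-criteria-3})$. The latter is the classical Gelfand-type argument: by the spectral radius formula $\spr(T) = \lim_{k\to\infty} \norm{T^k}^{1/k}$, if $\spr(T) < 1$ we may pick $a$ with $\spr(T) < a < 1$; then $\norm{T^k}^{1/k} < a$ for all $k \ge k_0$, so $\norm{T^k} \le a^k$ for $k \ge k_0$, and setting $M := \max\{1, \max_{0 \le k < k_0} \norm{T^k} a^{-k}\}$ yields $\norm{T^k} \le M a^k$ for all $k \in \Z_+$, which is uniform exponential stability. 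For $(\ref{itm:Ulin-SGC-criteria-32}) \Rightarrow (\ref{itm:Ulin-SGC-criteria-1})$ I would argue as follows: uniform weak attractivity, applied with $r = 1$ and, say, $\varepsilon = 1/2$, produces a time $\tau$ such that every $x$ in the closed unit ball satisfies $\norm{T^k x} \le 1/2$ for some $k \le \tau$. I claim this forces $\norm{T^\tau} \cdot$ (something) to be small — but in general the value of $k$ depends on $x$, so one must be careful. A clean way is to invoke a Baire-category / uniform boundedness argument, or more elementarily to note that the set $U := \bigcup_{k=0}^{\tau} \{x : \norm{T^k x} \le 1/2\}$ covers the unit ball; using that $\norm{T^k x} \le 1/2$ together with $\norm{T^\tau} \le \prod$-type bounds is not quite enough, so instead I would bound $\norm{T^{\tau}x}$ via $\norm{T^{\tau}x} = \norm{T^{\tau - k}T^k x} \le \norm{T^{\tau-k}}\cdot \tfrac12 \le \tfrac12 \max_{0 \le j \le \tau}\norm{T^j} =: \tfrac{C}{2}$ for every unit vector $x$, hence $\norm{T^\tau} \le C/2$. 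This alone does not give $\spr(T)<1$; the resolution is to iterate: since uniform weak attractivity is a property that can be re-applied, one shows by induction that $\norm{T^{n\tau}}$ stays bounded (this uses submultiplicativity plus the reattainment of the unit-ball-covering property after rescaling), and in fact that $\liminf_k \norm{T^k} = 0$, which combined with submultiplicativity of $k \mapsto \norm{T^k}$ yields $\spr(T) = \lim_k \norm{T^k}^{1/k} < 1$.

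The main obstacle is precisely the non-uniformity in the definition of uniform weak attractivity: the return time $k \le \tau$ at which $\norm{T^k x}$ becomes small is allowed to vary with $x$, so one cannot directly read off an operator-norm estimate $\norm{T^j}$ for a single exponent $j$. The key idea to overcome this is that for a \emph{linear} system the bound $\norm{T^k x} \le \varepsilon$ on the unit ball, for some $k \le \tau$, lets one interpolate to the common exponent $\tau$ at the cost of the uniform constant $C := \max_{0 \le j \le \tau} \norm{T^j}$, giving $\norm{T^\tau} \le \varepsilon C$; choosing $\varepsilon$ small enough (smaller than $1/C$, which requires first fixing $\tau$ and hence $C$, then shrinking $\varepsilon$ in a second application) forces $\norm{T^\tau} < 1$ for a genuine power $\tau \ge 1$, and then $\spr(T) \le \norm{T^\tau}^{1/\tau} < 1$. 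This two-step use of the hypothesis — first to pin down $\tau$ and the constant $C$, then again with $\varepsilon < 1/C$ — is the crux of the argument; everything else is the standard Gelfand-formula bookkeeping.
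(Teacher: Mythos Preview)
Your implications $(\ref{itm:Ulin-SGC-criteria-3})\Rightarrow(\ref{itm:Ulin-SGC-criteria-2})\Rightarrow(\ref{itm:Ulin-SGC-criteria-32})$ and $(\ref{itm:Ulin-SGC-criteria-1})\Rightarrow(\ref{itm:Ulin-SGC-criteria-3})$ are correct and cleanly argued. The paper itself does not prove this proposition but only cites references, so a self-contained argument is in principle welcome.

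There is, however, a genuine gap in $(\ref{itm:Ulin-SGC-criteria-32})\Rightarrow(\ref{itm:Ulin-SGC-criteria-1})$. Your ``two-step'' argument, which you call the crux, is circular as written: from a first application with $\varepsilon=1/2$ you fix $\tau$ and $C:=\max_{0\le j\le\tau}\norm{T^j}$, and then re-apply uniform weak attractivity with some $\varepsilon<1/C$. But this second application produces a \emph{new} time $\tau'$, and the interpolation bound $\norm{T^{\tau'}x}\le\norm{T^{\tau'-k}}\,\varepsilon$ requires control of $\max_{0\le j\le\tau'}\norm{T^j}$, not of $C$. Nothing so far prevents this new maximum from exceeding $1/\varepsilon$. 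You do say earlier that ``one shows by induction that $\norm{T^{n\tau}}$ stays bounded'', but this is precisely the missing step, and the hint ``submultiplicativity plus reattainment'' does not deliver it: e.g.\ $\norm{T^\tau}\le C/2$ gives only $\norm{T^{n\tau}}\le(C/2)^n$, which explodes if $C>2$. Likewise, the iteration gives for each unit $x$ a time $k(x,n)\le n\tau$ with $\norm{T^{k(x,n)}x}\le 2^{-n}$, but since $k(x,n)$ depends on $x$ this does not directly yield $\liminf_k\norm{T^k}=0$.

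The repair is to establish power-boundedness first, and it comes out of the very iteration you describe. Set $P_m:=\max_{0\le j\le m}\norm{T^j}$. For $m\ge\tau$ and any unit $x$, the rescale-and-iterate argument gives some $k$ with $1\le k\le n\tau\le m$ (where $n=\lfloor m/\tau\rfloor\ge1$) and $\norm{T^kx}\le 2^{-n}$; hence $\norm{T^mx}\le\norm{T^{m-k}}\,2^{-n}\le P_{m-1}\,2^{-n}$. Thus $\norm{T^m}\le \tfrac12 P_{m-1}$, which forces $P_m=P_{m-1}$ for all $m\ge\tau$. Consequently $\sup_m\norm{T^m}=P_{\tau-1}=:C<\infty$, and then $\norm{T^m}\le C\,2^{-\lfloor m/\tau\rfloor}\to 0$, which is already~$(\ref{itm:Ulin-SGC-criteria-2})$ and hence~$(\ref{itm:Ulin-SGC-criteria-1})$. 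Once this uniform bound $C$ on \emph{all} powers is in hand, your two-step argument also works verbatim with $\tau'$ in place of $\tau$.
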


\begin{proof}
	For \eqref{itm:Ulin-SGC-criteria-1} $\Iff$ \eqref{itm:Ulin-SGC-criteria-2} $\Iff$ \eqref{itm:Ulin-SGC-criteria-3} see,
e.g.,~\cite[Lemma 2.1 and Theorem 2.2]{Prz88} and for 
	\eqref{itm:Ulin-SGC-criteria-2} $\Iff$ \eqref{itm:Ulin-SGC-criteria-32} see \cite[Proposition 5.1]{Mir17a} (shown for continuous-time systems, but easily transferable to the discrete-time case).
\end{proof}

\subsection{General stability criteria for positive systems}
\label{subsection:positive-discrete:stability}

From now on we focus in this section on the positive case, i.e., we study linear operators on \emph{ordered Banach spaces} and we assume that the operators are \emph{positive} in the sense that they leave the positive cone invariant. 

Positivity of the operator $T$ does not simplify the criteria for uniform exponential stability considered in Proposition~\ref{prop:UGAS_discrete_systems_and_Small-gain_theorem_Operators_with_Order-uniform-conditions}. On the other hand though, positivity allows to show diverse characterizations of a very different nature, which are typically referred to as \emph{small-gain} type conditions.
We proceed to our main result for discrete-time positive linear systems, which contains several such criteria.

\begin{theorem} 
	\label{thm:stability-for-pos-ops}
	Let $(X,X^+)$ be an ordered Banach space with generating and normal cone and let $T \in \calL(X)$ be positive. Then the following assertions are equivalent:
	\begin{enumerate}[\upshape (i)]
		\item\label{thm:stability-for-pos-ops:itm:stability} 
		\emph{Uniform exponential stability:} 
		The system~\eqref{eq:Gamma-discrete-time-system} satisfies the equivalent criteria of 
		Proposition~\ref{prop:UGAS_discrete_systems_and_Small-gain_theorem_Operators_with_Order-uniform-conditions}, 
		i.e., we have $\spr(T) < 1$.
		
		\item\label{thm:stability-for-pos-ops:itm:pos-resolvent} 
		\emph{Positivity of the resolvent at $1$:} 
		The operator $\id - T: X \to X$ is bijective and its inverse $(\id - T)^{-1}$ is positive.\footnote{Note that as $\id - T$ is invertible and bounded, then $(\id - T)^{-1}$ is closed, and since $T$ is surjective, $(\id - T)^{-1}$ is bounded by a closed graph theorem. Thus, $1 \in \rho(T)$ and $(\id - T)^{-1}$ is indeed a resolvent.}
		
		\item\label{thm:stability-for-pos-ops:itm:mbi} 
		\emph{Monotone bounded invertibility property:} 
		There exists a number $c \ge 0$ such that
		\begin{align*}
			(\id - T)x \le y \qquad \Rightarrow \qquad \norm{x} \le c \norm{y}
		\end{align*}
		holds for all $x,y \in X^+$.
				
		\item\label{thm:stability-for-pos-ops:itm:uniform-small-gain} 
		\emph{Uniform small-gain condition:} 
		There is a number $\eta > 0$ such that
		\begin{align*}
			\dist\big((T-\id)x,X^+\big) \ge \eta \norm{x}
		\end{align*}
		for each $x \in X^+$.
			
		\item\label{thm:stability-for-pos-ops:itm:perturbed-small-gain} 
		\emph{Robust small-gain condition:} 
		There exists a number $\varepsilon > 0$ such that
		\begin{align}
			\label{eq:Robust small-gain condition}
			(T+P)x \not\ge x
		\end{align}
		for every $0 \not= x \in X^+$ and for every positive operator $P \in \calL(X)$ of norm $\norm{P} \le \varepsilon$.
		
		\item\label{thm:stability-for-pos-ops:itm:perturbed-rank-1-small-gain} 
		\emph{Rank-$1$ robust small-gain condition:} 
		There exists a number $\varepsilon > 0$ such that 
		\begin{align*}
			(T+P)x \not\ge x
		\end{align*}
		for every $0 \not= x \in X^+$ and for every positive operator $P \in \calL(X)$ of rank $1$ and of norm $\norm{P} \le \varepsilon$.
	\end{enumerate}
\end{theorem}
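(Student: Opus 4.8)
\emph{Strategy.} The plan is to establish the cyclic chain
\[
\text{(i)} \Rightarrow \text{(ii)} \Rightarrow \text{(iii)} \Rightarrow \text{(iv)} \Rightarrow \text{(v)} \Rightarrow \text{(vi)} \Rightarrow \text{(i)}.
\]
The step ``(v) $\Rightarrow$ (vi)'' is trivial, since a positive rank-$1$ operator of norm $\le\varepsilon$ is in particular a positive operator of norm $\le\varepsilon$. The steps ``(i) $\Rightarrow$ (ii) $\Rightarrow$ (iii) $\Rightarrow$ (iv) $\Rightarrow$ (v)'' are elementary manipulations involving the normality constant $C$ from \eqref{eq:normality-constant} and the decomposition constant $M$ from \eqref{eq:bounded-decomposition}; the only substantial step is ``(vi) $\Rightarrow$ (i)''.

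\emph{The elementary steps.} For ``(i) $\Rightarrow$ (ii)'': when $\spr(T)<1$ the Neumann series $\sum_{k\ge0}T^k$ converges in $\calL(X)$ to $(\id-T)^{-1}$, and since each partial sum is positive and $X^+$ is closed, the limit is positive. For ``(ii) $\Rightarrow$ (iii)'': if $x,y\in X^+$ and $(\id-T)x\le y$, applying the positive operator $(\id-T)^{-1}$ gives $0\le x\le(\id-T)^{-1}y$, whence $\norm{x}\le C\norm{(\id-T)^{-1}}\norm{y}$ by normality, so (iii) holds with $c:=C\norm{(\id-T)^{-1}}$. For ``(iii) $\Rightarrow$ (iv)'' (we may take $c>0$): suppose some $x\in X^+$ satisfied $\dist\big((T-\id)x,X^+\big)<\eta\norm{x}$; then $x\ne0$ and there is $y\in X^+$ with $\norm{u}<\eta\norm{x}$ for $u:=y+(\id-T)x$ (note $(T-\id)x-y=-u$); decomposing $u=u_1-u_2$ with $u_i\in X^+$ and $\norm{u_i}\le M\norm{u}$ yields $(\id-T)x\le u\le u_1$, so $\norm{x}\le c\norm{u_1}<cM\eta\norm{x}$ by (iii); choosing $\eta:=1/(cM+1)$ makes this impossible, so (iv) holds. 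For ``(iv) $\Rightarrow$ (v)'' with $\varepsilon:=\eta/2$: if $0\ne x\in X^+$, $P\ge0$, $\norm{P}\le\varepsilon$ and $(T+P)x\ge x$, then $(T-\id)x+Px\in X^+$ with $Px\in X^+$ and $\norm{Px}\le\varepsilon\norm{x}$, so $\dist\big((T-\id)x,X^+\big)\le\varepsilon\norm{x}<\eta\norm{x}$, contradicting (iv).

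\emph{The main step ``(vi) $\Rightarrow$ (i)''.} I would argue by contraposition: assume $r:=\spr(T)\ge1$. Since $T$ is positive and the cone is normal, the classical Perron--Frobenius-type fact gives $r\in\sigma(T)$, hence $\norm{(\mu\id-T)^{-1}}\ge1/(\mu-r)\to\infty$ as $\mu\downarrow r$. For $\mu>r$ the resolvent is the positive operator $\sum_{k\ge0}\mu^{-k-1}T^k$, and since the cone is generating one has $\norm{(\mu\id-T)^{-1}}\le2M\sup\{\norm{(\mu\id-T)^{-1}z}:z\in X^+,\ \norm{z}\le1\}$; so there are $z_\mu\in X^+$ with $\norm{z_\mu}=1$ and $\norm{(\mu\id-T)^{-1}z_\mu}\to\infty$. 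Put $w_\mu:=(\mu\id-T)^{-1}z_\mu\in X^+$ and $v_\mu:=w_\mu/\norm{w_\mu}\in X^+$, so $\norm{v_\mu}=1$ and $e_\mu:=(\mu\id-T)v_\mu=z_\mu/\norm{w_\mu}\to0$. Now correct $v_\mu$ by a small positive rank-one operator: using that the cone is generating, choose $f_\mu\in X^+$ with $f_\mu\ge e_\mu$ and $\norm{f_\mu}\le M\norm{e_\mu}$; using that $X^+$ is normal — equivalently, that $(X')^+$ is generating — choose a positive functional $\varphi_\mu$ with $\langle\varphi_\mu,v_\mu\rangle=1$ and $\norm{\varphi_\mu}$ bounded by a constant depending only on the cone (take a norming functional for $v_\mu$, split it within $(X')^+$, and keep and rescale the positive part). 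Then $P_\mu\colon x\mapsto\langle\varphi_\mu,x\rangle f_\mu$ is positive, of rank at most $1$, with $\norm{P_\mu}\to0$, and
\[
(T+P_\mu)v_\mu=Tv_\mu+f_\mu\ge Tv_\mu+e_\mu=\mu v_\mu\ge v_\mu
\]
because $\mu>r\ge1$. Taking $\mu$ close enough to $r$ that $\norm{P_\mu}\le\varepsilon$ contradicts (vi), since $v_\mu\in X^+\setminus\{0\}$. Hence $\spr(T)<1$.

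\emph{Main obstacle.} Everything outside ``(vi) $\Rightarrow$ (i)'' is bookkeeping with the normality and decomposition constants. The genuinely spectral input is that $\spr(T)$ itself lies in $\sigma(T)$ — this is exactly where normality of the cone is indispensable — and the main technical point is to upgrade the resulting approximate eigenvalue to a \emph{positive} approximate eigenvector and to absorb its defect $e_\mu$ into a \emph{positive rank-one} operator of arbitrarily small norm; both are possible precisely because the cone, and hence also $(X')^+$, is generating.
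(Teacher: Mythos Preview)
Your proof is correct and follows essentially the same cycle as the paper's, with the same ideas at each step (Neumann series, normality constant, decomposition constant, and in (vi)$\Rightarrow$(i) a positive approximate eigenvector corrected by a positive rank-one perturbation built from a positive functional obtained via the generating dual cone). One small remark on your (vi)$\Rightarrow$(i): since you constructed $v_\mu=(\mu\id-T)^{-1}z_\mu/\norm{(\mu\id-T)^{-1}z_\mu}$ with $z_\mu\in X^+$, the defect $e_\mu=z_\mu/\norm{w_\mu}$ already lies in $X^+$, so the auxiliary $f_\mu$ and the appeal to the decomposition property there are unnecessary---you may set $P_\mu x:=\langle\varphi_\mu,x\rangle\,e_\mu$ directly.
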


\begin{remarks}
	\begin{enumerate}[\upshape (a)]
		\item 
		The terminology \q{small-gain condition} stems from the study of interconnected systems in systems and control theory.
		In this context, the gain describes the response of the system on the applied input.
		As an example, consider two systems $\Sigma_1$ and $\Sigma_2$. 
		If $\gamma_{12}>0$ is the gain describing the influence of the system $\Sigma_2$ on the system $\Sigma_1$, and 
		$\gamma_{21}>0$ is the gain describing the influence of the system $\Sigma_1$ on the $\Sigma_2$, then the condition $\gamma_{12}\cdot\gamma_{21}<1$ guarantees in a proper context the stability of the feedback interconnection of $\Sigma_1$ and $\Sigma_2$ and is referred to as a \q{small-gain condition}.
		At the same time, one has the equivalence
		\[
			\quad
			\gamma_{12}\cdot\gamma_{21}<1 
			\; \qiq \;
			\begin{pmatrix}
				0 &  \gamma_{12} \\
				\gamma_{21}	& 0
			\end{pmatrix} 
			\begin{pmatrix}
				s_1\\
				s_2	
			\end{pmatrix}
			\not\geq
			\begin{pmatrix}
				s_1\\
				s_2	
			\end{pmatrix}
			\; \text{ for all } (s_1,s_2) \in \R^2_+\backslash\{0\}
		\]
		(for the implication \q{$\Leftarrow$}, just take $s_1:=1$, $s_2:=\gamma_{21}$).
		This explains the use of the term \q{small-gain condition} for conditions like \eqref{eq:Robust small-gain condition}.
		
		\item 
		In view of Theorem~\ref{thm:stability-for-pos-ops}(\ref{thm:stability-for-pos-ops:itm:uniform-small-gain}) we point out 
		that, in the important case where $X$ is a Banach lattice, the distance to the positive cone can be computed by means of the formula
		\begin{align*}
			\dist(Tx-x,X^+) = \norm{\left(Tx-x\right)^-}
		\end{align*}
		for each vector $x$.
		This follows easily from standard properties of Banach lattices; 
		the argument can be found in detail in \cite[Remark~7.3]{MKG20}.
		
		\item 
		In assertion~(\ref{thm:stability-for-pos-ops:itm:perturbed-small-gain})
		of the theorem, it does not suffice to consider only a single non-zero operator $P$ as a perturbation. As a counterexample, let $p \in [1,\infty]$ and let $T: \ell_p \to \ell_p$ denote the right shift operator given by
		\begin{align*}
			T: (x_1, x_2, \dots) \mapsto (0, x_1, x_2, \dots).
		\end{align*}
		Moreover, let $P = \frac{1}{2}\id: \ell_p \to \ell_p$ denote half the identity operator. 
		Then it is easy to check that $(T + P)x \not\ge x$ for each non-zero vector $x \ge 0$.
		Yet, $T$ has spectral radius $1$.
		
		A related observation is made in Example~\ref{ex:SS_does_not_imply_ExpSt}.
		
		\item 
		We note that all the equivalent conditions in Theorem~\ref{thm:stability-for-pos-ops} 
		can also be formulated in terms of the dual operator $T'$, 
		since the dual operator is positive, too, and since $\spr(T') = \spr(T)$.
	\end{enumerate}
\end{remarks}

A couple of results that are loosely related to the small-gain type conditions above can be found in Sections~6 and~7 of the classical paper \cite{Karlin1959}.
For the proof of Theorem~\ref{thm:stability-for-pos-ops} we need two lemmas. The first one is about so-called \emph{approximate eigenvectors}. A number $\lambda$ is called an \emph{approximate eigenvalue} of the operator $T$ if there exists a sequence of vectors $(x_n)_{n\in\N}$ (in the complexification of $X$) such that $\norm{x_n} = 1$ for each $n$ and such that $\lambda x_n - Tx_n \to 0$; in this case, the sequence $(x_n)_{n\in\N}$ is called an \emph{approximate eigenvector} that corresponds to the approximate eigenvalue $\lambda$. It is easy to see that every approximate eigenvalue of $T$ is a spectral value of $T$ (i.e., belongs to the spectrum $\sigma(T)$).

\begin{lemma}
	\label{lem:pos-approx-ev}
	Let $(X,X^+)$ be an ordered Banach space with generating and normal cone and let $T \in \calL(X)$ be positive. 
	Then $\spr(T)$ is an approximate eigenvalue of $T$, and there exists an associated approximate eigenvector $(x_n)_{n\in\N}$ that consists of positive vectors in $X$.
\end{lemma}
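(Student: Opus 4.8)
The plan is to extract the positive approximate eigenvector from the resolvent of $T$ at real points just above $\spr(T)$. The one ingredient that is not completely elementary is the classical fact that, for a positive operator on an ordered Banach space with normal cone, $\spr(T) \in \sigma(T)$; I would cite this or, for completeness, recall its proof, which is a Pringsheim-type argument: the operator-valued power series $\mu \mapsto \sum_{k \geq 0} \mu^k T^k = (\id - \mu T)^{-1}$ has radius of convergence $1/\spr(T)$ and all its coefficients $T^k$ are positive, so normality of the cone forces the boundary point $\mu = 1/\spr(T)$ to be a singularity, whence $(\lambda \id - T)^{-1}$ cannot be continued analytically through $\lambda = \spr(T)$. (If $\spr(T) = 0$ this is immediate, since then $\sigma(T) = \{0\}$.)

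Granting $\spr(T) \in \sigma(T)$, the first step is to record two facts about $(\lambda \id - T)^{-1}$ for real $\lambda > \spr(T)$. Such $\lambda$ lie in the resolvent set, and the Neumann series $(\lambda \id - T)^{-1} = \sum_{k \geq 0} \lambda^{-(k+1)} T^k$ converges in operator norm with positive summands; since the cone of positive operators in $\calL(X)$ is norm-closed (the cone $X^+$ being closed), we get $(\lambda \id - T)^{-1} \geq 0$. Moreover, since $\spr(T) \in \sigma(T)$ we have $\dist(\lambda, \sigma(T)) \leq \lambda - \spr(T)$, so the elementary resolvent bound $\norm{(\lambda \id - T)^{-1}} \geq 1/\dist(\lambda, \sigma(T))$ shows that $\norm{(\lambda \id - T)^{-1}} \to \infty$ as $\lambda \downarrow \spr(T)$.

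The second step exploits that the cone is generating. By \eqref{eq:bounded-decomposition} there is $M > 0$ such that each $x$ with $\norm{x} \leq 1$ can be written as $x = y - z$ with $y, z \in X^+$ and $\norm{y}, \norm{z} \leq M$; hence, for every $S \in \calL(X)$, one has $\norm{S} \leq 2M \sup\{\norm{Sw} : w \in X^+, \norm{w} = 1\}$. Applied to $S = (\lambda \id - T)^{-1}$, this yields $\sup\{\norm{(\lambda \id - T)^{-1} w} : w \in X^+, \norm{w} = 1\} \to \infty$ as $\lambda \downarrow \spr(T)$. I would then fix a sequence $\lambda_n \downarrow \spr(T)$ with $\lambda_n > \spr(T)$, choose $u_n \in X^+$ with $\norm{u_n} = 1$ and $\norm{(\lambda_n \id - T)^{-1} u_n}$ at least half the supremum above (so that $\norm{(\lambda_n \id - T)^{-1} u_n} \to \infty$), and put $x_n := (\lambda_n \id - T)^{-1} u_n \big/ \norm{(\lambda_n \id - T)^{-1} u_n}$. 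Then $x_n \in X^+$ because $(\lambda_n \id - T)^{-1} \geq 0$, and $\norm{x_n} = 1$; and since $(\lambda_n \id - T)(\lambda_n \id - T)^{-1} u_n = u_n$,
\[
	\norm{\lambda_n x_n - T x_n} = \frac{1}{\norm{(\lambda_n \id - T)^{-1} u_n}} \longrightarrow 0 ,
\]
so that $\spr(T) x_n - T x_n = (\spr(T) - \lambda_n) x_n + (\lambda_n x_n - T x_n) \to 0$, which is exactly what we want.

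The main obstacle is the input $\spr(T) \in \sigma(T)$: for a general operator the resolvent need not blow up as $\lambda$ decreases to the spectral radius, and it is precisely here that positivity together with the normality of the cone is used; the rest of the argument needs only the generating property of the cone and the standard behaviour of the resolvent near the spectrum. A minor point to handle is that $\sigma(T)$ is defined via the complexification $T_\C$, but since the $\lambda$ we use are real this causes no difficulty: $(\lambda \id - T)^{-1}$ is then the restriction to $X$ of $(\lambda \id - T_\C)^{-1}$, and the resolvent lower bound transfers up to the harmless complexification constant.
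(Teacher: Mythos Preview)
Your argument is correct and follows the same strategy as the paper's proof: use $\spr(T)\in\sigma(T)$ to make the resolvent blow up along real $\lambda\downarrow\spr(T)$, exploit positivity of $(\lambda\id-T)^{-1}$ via the Neumann series, and normalise to obtain positive approximate eigenvectors. The only cosmetic difference is that the paper invokes the uniform boundedness principle to find a single vector $u$, decomposes it as $u=v-w$ with $v,w\in X^+$, and then works with one fixed positive vector, whereas you use the quantitative decomposition bound to control the supremum over $X^+$ directly and allow a varying $u_n$; both routes are equivalent in spirit and difficulty.
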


The proof of the lemma is a simple variation of a standard argument in operator theory. For the convenience of the reader and in order to be more self-contained, we include the details.

\begin{proof}[Proof of Lemma~\ref{lem:pos-approx-ev}]
	Since $X^+$ is generating and normal, the positivity of $T$ implies that $\spr(T)$ is a spectral value of $T$ (see for instance \cite[paragraph~2.2 on p.\ 311]{SchaeferWolff1999}). Thus, we have $\norm{(r\id-T)^{-1}} \to \infty$ as $r \downarrow \spr(T)$. By the uniform boundedness theorem, we can find a vector $u \in X$ and a sequence $(r_n)_{n\in\N}$ in $ (\spr(T),\infty)$ that converges to $\spr(T)$ such that $\norm{(r_n\id-T)^{-1}u} \to \infty$ as $n\to\infty$. As the cone $X^+$ is generating, we can write $u$ as $u = v-w$ for vector $v,w \in X^+$; consequently, at least one of the sequences $\big((r_n\id - T)^{-1}v\big)_{n \in \N}$ and $\big((r_n\id - T)^{-1}w\big)_{n \in \N}$ is unbounded. Thus, by choosing a subsequence of $(r_n)_{n \in \N}$ and by interchanging $v$ and $w$ if necessary, we may assume that $\alpha_n := \norm{(r_n\id-T)^{-1}v} \to \infty$. 
	
	Now define $x_n := \frac{1}{\alpha_n} (r_n\id - T)^{-1}v$ for each $n \in \N$. Clearly, $\norm{x_n} = 1$. Moreover, the resolvent $(r_n\id - T)^{-1}$ is positive for each $n$ due to the Neumann series representation formula; thus, all vectors $x_n$ are positive. It only remains to show that $(\spr(T)\id - T)x_n \to 0$. And indeed, we have
	\begin{align*}
		(\spr(T)\id - T)x_n 
		& = (\spr(T) - r_n) x_n + (r_n \id - T) \frac{1}{\alpha_n} (r_n \id - T)^{-1} v \\
		& = (\spr(T) - r_n) x_n + \frac{v}{\alpha_n} \to 0,
	\end{align*}
	since $\alpha_n \to \infty$ as $n \to \infty$.
\end{proof}

The second lemma that we need for the proof of Theorem~\ref{thm:stability-for-pos-ops} is the following simple observation.

\begin{lemma}
	\label{lem:decomposition-in-dual-space}
	Let $(X,X^+)$ be an ordered Banach space with total and normal cone (such that the dual cone $(X')^+$ is generating in $X'$) and let $M' \ge 0$ be a real number such that the decomposition property~\eqref{eq:bounded-decomposition} holds in the dual space $X'$ for $M'$.
	
	For each vector $x \in X^+$ of norm $1$ there exists a functional $z' \in (X')^+$ of norm at most $M'$ such that
	\begin{align*}
		\langle z', x \rangle \ge 1.
	\end{align*}
\end{lemma}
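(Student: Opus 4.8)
The plan is to combine the Hahn--Banach theorem with the bounded decomposition property~\eqref{eq:bounded-decomposition} in the dual space. First I would produce a norming functional for $x$: since $\norm{x} = 1$, the Hahn--Banach theorem provides an $x' \in X'$ with $\norm{x'} = 1$ and $\langle x', x\rangle = \norm{x} = 1$.

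Next I would decompose $x'$. By hypothesis the decomposition property~\eqref{eq:bounded-decomposition} holds in $X'$ with the constant $M'$; note that this is meaningful because the standing assumptions on $X^+$ (totality and normality) guarantee, via the duality results recalled above, that $(X')^+$ is a genuine generating cone. Applying the decomposition to $x'$ yields functionals $y', w' \in (X')^+$ with
\begin{align*}
  x' = y' - w' \qquad \text{and} \qquad \norm{y'}, \norm{w'} \le M' \norm{x'} = M'.
\end{align*}

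Finally I would evaluate the identity $x' = y' - w'$ at the vector $x$. Because $x \in X^+$ and $w' \in (X')^+$, the pairing $\langle w', x\rangle$ is non-negative, and therefore
\begin{align*}
  \langle y', x\rangle = \langle x', x\rangle + \langle w', x\rangle = 1 + \langle w', x\rangle \ge 1.
\end{align*}
Hence $y'$ --- which plays the role of the functional called $z'$ in the statement --- is positive, has norm at most $M'$, and satisfies $\langle y', x\rangle \ge 1$, which is exactly the claim.

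I do not expect a genuine obstacle here; the only points that need care are to retain the \emph{positive} summand $y'$ rather than $w'$, to invoke the norm-controlled version of Hahn--Banach, and to recall why~\eqref{eq:bounded-decomposition} is legitimately available in $X'$ under the stated hypotheses. This is precisely the \q{simple observation} announced just before the lemma.
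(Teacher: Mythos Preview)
Your proof is correct and follows essentially the same approach as the paper's: norming functional via Hahn--Banach, bounded decomposition in the dual, and the observation that dropping the negative summand can only increase the value on a positive vector. The only difference is notational (you call the retained functional $y'$ rather than $z'$).
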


\begin{proof}
	Take any $x\in X^+$ with $\norm{x}=1$. Due to the Hahn--Banach extension theorem, there exists a functional $x' \in X'$ of norm $1$ such that $\langle x', x\rangle = 1$ (see, e.g., \cite[p.\,636]{CuZ20}). 
	We can decompose $x'$ as
	\begin{align*}
		x' = z' - y',
	\end{align*}
	where $z'$ and $y'$ are elements of $(X')^+$ of norm at most $M'$. Then we have
	\begin{align*}
		\langle z',x \rangle \ge \langle z', x \rangle - \langle y', x\rangle = \langle x', x\rangle = 1,
	\end{align*}
	which is the assertion.
\end{proof}

\begin{proof}[Proof of Theorem~\ref{thm:stability-for-pos-ops}]
	\Implies{thm:stability-for-pos-ops:itm:stability}{thm:stability-for-pos-ops:itm:pos-resolvent} 
	If the $\spr(T) < 1$, then $\id-T$ is clearly invertible, and it follows from the Neumann series representation of the resolvent that
	\begin{align*}
		(\id-T)^{-1} = \sum_{k=0}^\infty T^k \ge 0;
	\end{align*}
	the inequality at the end follows by applying the operator series to vectors $x \in X^+$ and using that $X^+$ is closed.
	
	\Implies{thm:stability-for-pos-ops:itm:pos-resolvent}{thm:stability-for-pos-ops:itm:mbi} 
	Let $C \in [0,\infty)$ be the normality constant from inequality~\eqref{eq:normality-constant}. If $x,y \in X^+$ and $(\id-T)x \le y$ we obtain from the positivity of the resolvent $(\id-T)^{-1}$ that $x \le (\id-T)^{-1}y$, and hence
	\begin{align*}
		\norm{x} \le C \norm{(\id-T)^{-1}} \norm{y}.
	\end{align*}
	This proves the monotone bounded invertibility property with $c = C \norm{(\id-T)^{-1}}$.
	
	\Implies{thm:stability-for-pos-ops:itm:mbi}{thm:stability-for-pos-ops:itm:uniform-small-gain} 
	Let $c>0$ be as in~\eqref{thm:stability-for-pos-ops:itm:mbi}. Since the cone $X^+$ is assumed to be generating, we can find a constant $M >0$ as in the decomposition property~\eqref{eq:bounded-decomposition}. 
	
	Now fix $x \in X^+$ and let $\varepsilon > 0$ be arbitrary; we are going to show that
	\begin{align}
		\label{eq:proof-of-thm-stability-for-pos-ops}
		\dist\big((T-\id)x, X^+\big) \ge \frac{1}{cM}\norm{x} - \varepsilon.
	\end{align}
	
	For convenience, denote $a := (T-\id)x$. By the definition of the distance, we can find a vector $z \in X^+$ such that $\norm{a-z} \le \dist(a,X^+) + \varepsilon$, and we set $y := a-z$. Now we decompose the vector $y$ as
	\begin{align*}
		y = u-v,
	\end{align*}
	where $u,v$ are in $X^+$ and satisfy the norm estimate
	\begin{align*}
		\norm{u}, \norm{v} \le M \norm{y} \le M \dist(a,X^+) + M \varepsilon.
	\end{align*}
	For the vector $(\id-T)x$ we have the estimate
	\begin{align*}
		(\id-T)x = -a = -y-z = v-u-z \le v,
	\end{align*}
	so the monotone bounded invertibility property from~(\ref{thm:stability-for-pos-ops:itm:mbi}) implies that
	\begin{align*}
		\norm{x} \le c \norm{v} \le cM \big(\dist(a,X^+) + \varepsilon\big).
	\end{align*}
	So we have indeed shown the claimed inequality~\eqref{eq:proof-of-thm-stability-for-pos-ops}. Since $\varepsilon$ was arbitrary, this gives us the uniform small-gain condition with $\eta = \frac{1}{cM}$.
	
	\Implies{thm:stability-for-pos-ops:itm:uniform-small-gain}{thm:stability-for-pos-ops:itm:perturbed-small-gain} 
	Choose $\varepsilon = \eta/2$, where $\eta$ is the number from~\eqref{thm:stability-for-pos-ops:itm:uniform-small-gain}. Now, let $x$ be a non-zero element of $X^+$ and let $P \in \calL(X)$ be a positive linear operator of norm at most $\varepsilon$. We have to show that $Tx + Px \not\ge x$, and to this end we may -- and will, in order to simplify the notation -- assume that $x$ has norm $1$. 
	
	For each vector $z \in X^+$, we know from the uniform small-gain condition that $Tx-x$ has distance at least $\eta$ from $z$. Since $\norm{Px} \le \eta/2$, it follows that $Tx+Px-x$ still has distance at least $\eta/2$ from $z$, so
	\begin{align*}
		\dist(Tx+Px-x,X^+) \ge \frac{\eta}{2} > 0.
	\end{align*}
	In particular, $Tx+Px-x$ is not in the cone, so $Tx+Px \not\ge x$.
	
	\Implies{thm:stability-for-pos-ops:itm:perturbed-small-gain}{thm:stability-for-pos-ops:itm:perturbed-rank-1-small-gain} This implication is obvious.
	
	\Implies{thm:stability-for-pos-ops:itm:perturbed-rank-1-small-gain}{thm:stability-for-pos-ops:itm:stability} 
	Let $\varepsilon>0$ be as in~\eqref{thm:stability-for-pos-ops:itm:perturbed-rank-1-small-gain}.
	We argue by contraposition: assume that $\spr(T) \ge 1$. 
	
	By Lemma~\ref{lem:pos-approx-ev}, $\spr(T)$ is an approximate eigenvalue of $T$ and there exists a corresponding approximate eigenvector $(x_n)_{n \in \N}$ in $X^+$; more precisely, this means that each vector $x_n$ has norm $1$ and that
	\begin{align*}
		(T - \spr(T)\id)x_n \to 0.
	\end{align*}
	
	Since the cone in our space is generating, we can choose a number $M \in [0,\infty)$ as in the decomposition property~\eqref{eq:bounded-decomposition}. Since the dual cone $(X')^+$ in $X'$ is generating, too (due to the normality of $X^+$), there also exists a constant $M' \in [0,\infty)$ with the same property for the dual cone $(X')^+$.
	
	For each index $n$ we can decompose the vector $(T - \spr(T)\id)x_n$ as
	\begin{align*}
		(T - \spr(T)\id)x_n = y_n - z_n,
	\end{align*}
	where $y_n,z_n$ are vectors in $X^+$ of norm at most $M \norm{(T-\spr(T)\id)x_n}$. If we choose a sufficiently large number $n_0 \in \N$, we thus have $M'\norm{z_{n_0}} \le \varepsilon$.
	
	We now choose a functional $z' \in (X')^+$ of norm at most $M'$ such that $\langle z', x_{n_0}\rangle \ge 1$; such a functional $z'$ exists, as shown in Lemma~\ref{lem:decomposition-in-dual-space}. The rank-$1$ operator $P \in \calL(X)$ that is defined by the formula
	\begin{align*}
		Pv = \langle z', v\rangle z_{n_0} \quad \text{for each } v \in X,
	\end{align*}
	is positive and has norm
	\begin{align*}
		\norm{P} =\sup_{\norm{v}=1} \norm{ \langle z', v\rangle z_{n_0} }
		= \sup_{\norm{v}=1} \modulus{\langle z', v\rangle} \norm{ z_{n_0} } = \norm{z'} \norm{z_{n_0}} \le M' \norm{z_{n_0}} \le \varepsilon.
	\end{align*}
	On the other hand, we have
	\begin{align*}
		Tx_{n_0} + Px_{n_0} - x_{n_0} & \ge  (T - \spr(T)\id)x_{n_0} + Px_{n_0} \\
		& = y_{n_0} - z_{n_0} + \langle z', x_{n_0} \rangle z_{n_0} \ge 0
	\end{align*}
	since $\langle z', x_{n_0} \rangle \ge 1$. Hence, we have $Tx + Px \ge x$ for $x := x_{n_0}$.
\end{proof}

\begin{remark}[Nonlinearization of small-gain conditions]
	\label{rem:Nonlinear-uniform-SGC}	
	In \cite{MKG20} another definition of the uniform small-gain condition was used, namely that a nonlinear operator $T: X^+ \to X^+$ satisfies the \emph{uniform small-gain condition} if there is a homeomorphism $\eta: \R_+ \to \R_+$ (i.e., $\eta(0)=0$, $\eta$ is continuous and strictly increasing to infinity) such that 
	\begin{align*}
		\dist(Tx-x,X^+) \ge \eta (\norm{x}) \quad \text{for all } x \in X^+.
	\end{align*}
	For linear operators this definition coincides with our definition since the preceding inequality implies
	\begin{align*}
		\operatorname{dist}(Tx-x, X^+) 
		&= \inf_{y \in X^+} \norm{Tx-x-y} \\
		&= \norm{x} \inf_{y \in X^+} 
			\norm{ 
				\Big(T\frac{x}{\norm{x}}-\frac{x}{\norm{x}}\Big)-\frac{y}{\norm{x}} 
			} \\
		&= \norm{x} \operatorname{dist}
			\Big(
				\Big(T\frac{x}{\norm{x}}-\frac{x}{\norm{x}}\Big), X^+
			\Big) 
			\geq \eta(1)\norm{x}.
	\end{align*}
	for each $x \in X^+ \setminus \{0\}$.
	
	Similarly, one can \q{nonlinearize} the monotone bounded invertibility property.
	Furthermore, one can show the equivalence of the nonlinear uniform small-gain condition and a nonlinear version of 
	the monotone bounded invertibility property also for the case of nonlinear monotone operators, even if the cone $X^+$ is not normal, see \cite[Proposition 7.1]{MKG20}.
	As a consequence, we could obtain the equivalence of items \eqref{thm:stability-for-pos-ops:itm:mbi} and \eqref{thm:stability-for-pos-ops:itm:uniform-small-gain} in Theorem~\ref{thm:stability-for-pos-ops} from \cite[Proposition 7.1]{MKG20}.
	However, we preferred to give a direct and self-contained proof. \hfill $\qed$
\end{remark}

\begin{remark}
	\label{rem:Relation-to-DMS19a} 
	The motivation for characterizing uniform exponential stability of a positive system as in 
	items~\eqref{thm:stability-for-pos-ops:itm:perturbed-small-gain} 
	and \eqref{thm:stability-for-pos-ops:itm:perturbed-rank-1-small-gain} 
	of Theorem~\ref{thm:stability-for-pos-ops} is the robust strong small-gain condition 
	introduced in \cite{DMS19a} for the small-gain analysis of infinite networks.
\end{remark}

\subsection{Cones with interior points}
\label{subsection:positive-discrete:interior-points}

In this section we consider the special case where the cone $X^+$ in an ordered Banach space contains an interior point. To this end it is worthwhile to recall a few characterisations of interior points in the cone:

\begin{proposition} \label{prop:order-units}
	Let $(X,X^+)$ be an ordered Banach space and let $z \in X^+$. The following assertions are equivalent:
	\begin{enumerate}[\upshape (i)]
		\item\label{prop:order-units:itm:interior-point} The vector $z$ is an element of the topological interior of $X^+$.
		
		\item\label{prop:order-units:itm:order-unit} The vector $z$ is an \emph{order unit}, i.e., for each $y \in X$ there exists $\varepsilon > 0$ such that $z \ge \varepsilon y$.
		
		\item\label{prop:order-units:itm:order-unit-variant} The cone $X^+$ is generating, and for each $y \in X^+$ there exists $\varepsilon > 0$ such that $z \ge \varepsilon y$.
		
		\item\label{prop:order-units:itm:principal-ideal} The so-called \emph{principal ideal} $\bigcup_{n \in \N} [-nz,nz]$ equals $X$.
		
		\item\label{prop:order-units:itm:ball} There exists $\varepsilon > 0$ such that $z \ge y$ for every $y \in X$ of norm $\norm{y} \le \varepsilon$.
	\end{enumerate}
\end{proposition}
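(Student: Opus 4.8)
The plan is to prove the equivalences by a cycle of implications together with a couple of direct cross-links, exploiting that several of the statements are almost tautological reformulations of one another once the right constant is produced. Concretely, I would establish \Implies{prop:order-units:itm:interior-point}{prop:order-units:itm:ball}, then \Implies{prop:order-units:itm:ball}{prop:order-units:itm:order-unit}, then \Implies{prop:order-units:itm:order-unit}{prop:order-units:itm:principal-ideal}, then \Implies{prop:order-units:itm:principal-ideal}{prop:order-units:itm:order-unit-variant}, and finally close the loop with \Implies{prop:order-units:itm:order-unit-variant}{prop:order-units:itm:interior-point}. Along the way the implication \Implies{prop:order-units:itm:order-unit}{prop:order-units:itm:order-unit-variant} is free (it is a formal weakening once one also notes that an order unit forces the cone to be generating, by writing $y = \tfrac1\varepsilon z - (\tfrac1\varepsilon z - y)$ with both summands positive), so the cycle can be organised in whatever order is cleanest.

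The concrete steps: for \Implies{prop:order-units:itm:interior-point}{prop:order-units:itm:ball}, since $z \in \intt(X^+)$ there is $\varepsilon > 0$ with $z + B_\varepsilon \subseteq X^+$ where $B_\varepsilon$ is the closed $\varepsilon$-ball; hence for $\norm{y} \le \varepsilon$ we have $z - y \in X^+$, i.e.\ $z \ge y$. For \Implies{prop:order-units:itm:ball}{prop:order-units:itm:order-unit}: given arbitrary $y \in X$, the vector $\tilde y := \varepsilon y / \norm{y}$ (for $y \neq 0$; the case $y = 0$ is trivial) has norm $\varepsilon$, so $z \ge \tilde y$, i.e.\ $z \ge (\varepsilon/\norm{y}) y$, which is the order-unit property with the displayed $\varepsilon$ replaced by $\varepsilon/\norm{y}$. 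For \Implies{prop:order-units:itm:order-unit}{prop:order-units:itm:principal-ideal}: given $x \in X$, apply the order-unit property to $y = x$ and to $y = -x$ to get $\delta, \delta' > 0$ with $z \ge \delta x$ and $z \ge -\delta' x$; taking $\mu := \min(\delta,\delta')$ we get $-z/\mu \le x \le z/\mu$, so $x \in [-nz, nz]$ for any integer $n \ge 1/\mu$. The reverse inclusion is trivial, giving $\bigcup_n [-nz,nz] = X$. For \Implies{prop:order-units:itm:principal-ideal}{prop:order-units:itm:order-unit-variant}: the principal ideal being all of $X$ immediately gives that every $y \in X$ (in particular every $y \in X^+$) lies in some $[-nz,nz]$, so $z \ge y/n$; and it gives that $X^+$ is generating, since for $x \in [-nz,nz]$ one has $x = (nz) - (nz - x)$ with $nz \in X^+$ and $nz - x \in X^+$.

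The only implication with real content is the closing one, \Implies{prop:order-units:itm:order-unit-variant}{prop:order-units:itm:interior-point}, and I expect this to be the main obstacle because it is the one place where completeness of $X$ (via a Baire-category or open-mapping argument) genuinely enters. The plan is: assumption~\eqref{prop:order-units:itm:order-unit-variant} says $X = \bigcup_{n\in\N} \big([-nz,nz]\big) = \bigcup_{n\in\N} n\,\big([-z,z]\big)$, because for every $y \in X^+$ there is $\varepsilon$ with $y \le z/\varepsilon$ and $-y \le 0 \le z/\varepsilon$, so $X^+ \subseteq \bigcup_n n[-z,z]$, and then generatedness pushes this up to all of $X$. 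The set $[-z,z]$ is closed (it is an intersection of two translates of the closed cone), convex, and balanced in the order sense (symmetric under $y \mapsto -y$). By the Baire category theorem one of the closed sets $n[-z,z]$ has non-empty interior, hence so does $[-z,z]$; say $w + B_\delta \subseteq [-z,z]$. Using symmetry, $-w + B_\delta \subseteq [-z,z]$ as well, and then convexity gives $B_\delta = \tfrac12(w + B_\delta) + \tfrac12(-w + B_\delta) \subseteq [-z,z]$, so $0$ is interior to $[-z,z]$. Finally $[-z,z] \subseteq z - X^+$ (if $-z \le y \le z$ then $z - y \in X^+$), so $z + B_\delta \subseteq z + [-z,z] \subseteq X^+ + X^+ \subseteq X^+$ — wait, more carefully: $B_\delta \subseteq [-z,z]$ means every $u$ with $\norm{u}\le\delta$ satisfies $u \le z$, i.e.\ $z - u \in X^+$, equivalently $z + B_\delta \subseteq X^+$ — hence $z \in \intt(X^+)$. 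One should double-check the edge cases (e.g.\ $z$ could a priori be $0$, but then no order unit exists unless $X = \{0\}$, a degenerate case worth a one-line remark) and make sure the symmetrisation step is stated cleanly. I would cite \cite[Theorem~2.8]{AliprantisTourky2007} or similar for the Baire-category fact that an absorbing closed convex symmetric set in a Banach space is a neighbourhood of $0$, if a ready reference is preferred over the inline argument.
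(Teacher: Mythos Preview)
Your argument is correct and complete; the Baire-category step for \Implies{prop:order-units:itm:order-unit-variant}{prop:order-units:itm:interior-point} is carried out cleanly, including the symmetrisation to show that $0$ is interior to $[-z,z]$.

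The paper itself does not give a proof at all: it simply cites \cite[Proposition~2.11]{GlueckWeber2020} as a place where the equivalences are established. So your approach is not so much a \emph{different} proof as the \emph{only} proof on offer between the two --- you supply the full self-contained argument where the paper defers to the literature. What your version buys is independence from the external reference and transparency about where completeness enters (precisely in the Baire step closing the cycle); what the citation buys is brevity and an acknowledgement that this is standard ordered-Banach-space folklore. Either is defensible for a paper whose focus lies elsewhere; your detailed version would be appropriate if the article were intended to be more self-contained for readers less versed in cone theory.

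Two very minor stylistic points: in \Implies{prop:order-units:itm:ball}{prop:order-units:itm:order-unit} you might state at the outset that the $\varepsilon$ in~(ii) is allowed to depend on $y$ (which it is), to forestall confusion with the uniform $\varepsilon$ in~(v); and the parenthetical aside on the degenerate case $z=0$ can indeed be dispatched in one line, since all five conditions then force $X=\{0\}$.
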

\begin{proof}
	These equivalences are standard results in the theory of ordered Banach spaces; see for instance \cite[Proposition~2.11]{GlueckWeber2020} for a reference where all these equivalences are proved in one place.
\end{proof}

We point out that we do not a priori assume the cone $X^+$ to have non-empty interior in Proposition~\ref{prop:order-units}. 

If $(X, X^+)$ is an ordered Banach space and the cone $X^+$ has non-empty interior, we will write $x \ll y$ (or, synonymously, $y \gg x$) if $y-x \in \intt(X^+)$.
We will need the following simple estimate for the spectral radius of a positive operator.

\begin{proposition}
	\label{prop:spr-super-eigenvector}
	Let $(X,X^+)$ be an ordered Banach space and let $T \in \calL(X)$ be positive. 
	Further assume that $\intt\, X^+ \neq \emptyset$. Then 
	\begin{align*}
		\spr(T) \geq 
		\inf \left\{ \lambda \in [0,\infty) : \; \exists z \in \intt(X^+) \mbox{ s.t. } Tz \ll \lambda z \right\}.
	\end{align*}
\end{proposition}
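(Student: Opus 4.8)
The plan is to establish the inequality by exhibiting, for every $\varepsilon > 0$, a vector $z \in \intt(X^+)$ with $Tz \ll (\spr(T)+\varepsilon)z$. Writing
\[
	S := \left\{\lambda \in [0,\infty) : \; \exists\, z \in \intt(X^+) \text{ with } Tz \ll \lambda z\right\},
\]
this shows $\spr(T)+\varepsilon \in S$ for every $\varepsilon > 0$, hence $\inf S \le \spr(T)+\varepsilon$ for all such $\varepsilon$, and therefore $\inf S \le \spr(T)$, which is the assertion. (As a by-product this also shows $S \ne \emptyset$, so that the infimum is a genuine real number in $[0,\spr(T)]$.)

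To construct $z$, fix $\varepsilon > 0$ and set $\mu := \spr(T)+\varepsilon$. Since $\mu > \spr(T)$, the operator $\mu\id - T$ is invertible and the Neumann series $(\mu\id - T)^{-1} = \sum_{k=0}^{\infty} \mu^{-k-1}T^k$ converges in operator norm; as $T$ is positive and $X^+$ is closed, every partial sum is a positive operator, and hence so is $(\mu\id-T)^{-1}$ -- this is exactly the reasoning used in the step \Implies{thm:stability-for-pos-ops:itm:stability}{thm:stability-for-pos-ops:itm:pos-resolvent} of the proof of Theorem~\ref{thm:stability-for-pos-ops}. Now pick any $w \in \intt(X^+)$, which is possible by the standing assumption $\intt(X^+) \neq \emptyset$, and put $z := (\mu\id - T)^{-1}w$.

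It remains to check that $z \in \intt(X^+)$ and $Tz \ll \mu z$. Positivity of the resolvent gives $z \ge 0$, and applying $\mu\id - T$ to $z$ yields $\mu z - Tz = w$, so in particular $\mu z - w = Tz \ge 0$, i.e. $z \ge \mu^{-1}w$. Since $\mu^{-1}w$ is a positive scalar multiple of an interior point of $X^+$ it again lies in $\intt(X^+)$; and because adding an arbitrary element of $X^+$ to an interior point of $X^+$ again yields an interior point of $X^+$ (if $u + B \subseteq X^+$ for some open ball $B$ around the origin, then $(p+u)+B = p+(u+B) \subseteq p + X^+ \subseteq X^+$ for every $p \in X^+$), the decomposition $z = \mu^{-1}w + (z - \mu^{-1}w)$ shows $z \in \intt(X^+)$. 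Finally $\mu z - Tz = w \in \intt(X^+)$ is precisely the statement $Tz \ll \mu z$, so $\mu = \spr(T)+\varepsilon \in S$, which is what was needed.

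The argument has no genuinely hard step; the only point deserving care is that the resolvent a priori only delivers $z \ge 0$, and upgrading this to $z \in \intt(X^+)$ is what forces one to combine the elementary fact $\intt(X^+)+X^+ \subseteq \intt(X^+)$ with the lower bound $z \ge \mu^{-1}w$ coming from $\mu z - Tz = w$ and $Tz \ge 0$.
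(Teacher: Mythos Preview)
Your proof is correct and follows essentially the same route as the paper: for $\mu>\spr(T)$ one applies the resolvent $(\mu\id-T)^{-1}$ to an interior point $w$ of $X^+$, uses the Neumann series (or equivalently the identity $\mu z - Tz = w$ together with $Tz\ge 0$) to see that $z\ge \mu^{-1}w$ and hence $z\in\intt(X^+)$, and reads off $Tz\ll\mu z$ from $\mu z - Tz = w\in\intt(X^+)$. The only difference is cosmetic: you spell out the elementary fact $\intt(X^+)+X^+\subseteq\intt(X^+)$ explicitly, whereas the paper takes it for granted.
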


\begin{proof}
	Let $\lambda > \spr(T)$, and take an arbitrary point $y \in \intt(X^+)$. 
	Then the vector $z = z(y):= (\lambda \id - T)^{-1}y$ is also in $\intt(X^+)$.
	Indeed, it follows from the Neumann series representation of the resolvent 
	and from the positivity of $T$ that
	\begin{align}
	\label{eq:Expression-point-of-strict-decay}
		z = \sum_{k=0}^\infty \frac{T^k}{\lambda^{k+1}}y  \geq \frac{1}{\lambda}y;
	\end{align}
	hence, $z \in \intt(X^+)$ as $y \in \intt(X^+)$.
	Now, we estimate $Tz$ as
	\begin{align*}
		Tz
		& = \big(T - \lambda \id + \lambda \id\big) \, \big(\lambda \id - T\big)^{-1} y 
		 = - y + \lambda \big(\lambda \id - T\big)^{-1} y  = -y + \lambda z \ll \lambda z,
	\end{align*}
	which implies the claim.
\end{proof}

A result closely related to Proposition~\ref{prop:spr-super-eigenvector}, but for continuous-time systems, can be found in \cite[formula~(5.2) in Theorem~5.3]{ArendtChernoffKato1982}. 
A non-empty interior of the cone yields further powerful criteria for stability of positive systems; this is the content of the next theorem.

\begin{theorem}
	\label{thm:stability-for-pos-ops-interior-point}
	Let $(X,X^+)$ be an ordered Banach space with normal cone and suppose that the cone has non-empty interior. For every positive operator $T \in \calL(X)$, the following assertions are equivalent:
	\begin{enumerate}[\upshape (i)]
		\item\label{thm:stability-for-pos-ops-interior-point:item:stability} 
		\emph{Uniform exponential stability:}
		The system~\eqref{eq:Gamma-discrete-time-system} satisfies the equivalent criteria of 
		Proposition~\ref{prop:UGAS_discrete_systems_and_Small-gain_theorem_Operators_with_Order-uniform-conditions}, 
		i.e., we have $\spr(T) < 1$.
				
		\item\label{thm:stability-for-pos-ops-interior-point:item:small-gain-dual} 
		\emph{Dual small-gain condition:}
		For each $0 \not= x' \in (X')^+$ we have
		\begin{align*}
			T'x' \not\geq x'.
		\end{align*}
		
		\item\label{thm:stability-for-pos-ops-interior-point:item:small-gain-interior-point-all} 
		\emph{Interior point small-gain condition, first version:} For every interior point $z$ of $X^+$ there is a number $\eta > 0$ such that
		\begin{align*}
			Tx \not\ge x - \eta \norm{x} z \qquad \text{for all } x \in X^+ \setminus \{0\}.
		\end{align*}
		
		\item\label{thm:stability-for-pos-ops-interior-point:item:small-gain-interior-point-exists} 
		\emph{Interior point small-gain condition, second version:} There exists an interior point $z$ of $X^+$ and a number $\eta > 0$ such that
		\begin{align*}
			Tx \not\ge x - \eta \norm{x} z \qquad \text{for all } x \in X^+ \setminus \{0\}.
		\end{align*}
		
		\item\label{thm:stability-for-pos-ops-interior-point:item:strictly-decreasing} 
		\emph{Strong decreasing property, first version:}
		There exists an interior point $z$ of $X^+$ such that $Tz \ll z$.

		\item\label{thm:stability-for-pos-ops-interior-point:item:strictly-decreasing-lam} 
		\emph{Strong decreasing property, second version:}
		There exists an interior point $z$ of $X^+$  and $\lambda\in (0,1)$ such that 
		\begin{align}
		\label{eq:point-of-strict-decay}
			\jgc{Tz \le \lambda z}.
		\end{align}
				
		\item\label{thm:stability-for-pos-ops-interior-point:item:strong-stability} 
		\emph{Strong stability:}
		The system~\eqref{eq:Gamma-discrete-time-system} is strongly stable in the sense of Definition~\ref{def:strong-stability}.
		
		\item\label{thm:stability-for-pos-ops-interior-point:item:weak-attractivity} 
		\emph{Weak attractivity in the cone:} 
		For each $x \in X^+$ we have $\inf_{k\geq 0}\norm{T^kx} = 0$.
	\end{enumerate}
\end{theorem}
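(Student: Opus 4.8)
The plan is to prove all eight assertions equivalent by reusing Theorem~\ref{thm:stability-for-pos-ops} (which applies, since a cone with non-empty interior is automatically generating) and Proposition~\ref{prop:UGAS_discrete_systems_and_Small-gain_theorem_Operators_with_Order-uniform-conditions}, while exploiting the two extra features that an interior point $e \in \intt X^+$ provides: $e$ is an order unit, and every non-zero positive functional is strictly positive on $\intt X^+$. Concretely, I would establish the loops (i)~$\Rightarrow$~(v)~$\Rightarrow$~(vi)~$\Rightarrow$~(i), (i)~$\Rightarrow$~(iii)~$\Rightarrow$~(iv)~$\Rightarrow$~(i) and (i)~$\Rightarrow$~(vii)~$\Rightarrow$~(viii)~$\Rightarrow$~(i), together with (v)~$\Rightarrow$~(ii)~$\Rightarrow$~(i).

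For the first loop, (i)~$\Rightarrow$~(v) is the computation of Proposition~\ref{prop:spr-super-eigenvector} specialised to $\lambda = 1$: since $\spr(T) < 1$, the vector $z := (\id - T)^{-1}e = \sum_{k \ge 0} T^k e$ lies in $\intt X^+$ (it dominates $e$) and satisfies $z - Tz = e \gg 0$, i.e.\ $Tz \ll z$. For (v)~$\Rightarrow$~(vi) I would note that $z - Tz$ is itself an order unit, so Proposition~\ref{prop:order-units}(\ref{prop:order-units:itm:ball}) gives $z - Tz \ge \varepsilon\norm{z}^{-1} z$ for a small $\varepsilon \in (0,\norm{z})$, hence $Tz \le \lambda z$ with $\lambda := 1 - \varepsilon\norm{z}^{-1} \in (0,1)$. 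For (vi)~$\Rightarrow$~(i): iterating the order inequality yields $T^k z \le \lambda^k z$, and since $z$ is an order unit, every $x$ with $\norm{x} \le 1$ is sandwiched as $-\varrho^{-1}z \le x \le \varrho^{-1}z$ for a fixed $\varrho > 0$ (Proposition~\ref{prop:order-units}(\ref{prop:order-units:itm:ball})); applying $T^k$ and using the normality of the cone produces an estimate $\norm{T^k} \le \kappa\lambda^k$, so $\spr(T) < 1$ by Proposition~\ref{prop:UGAS_discrete_systems_and_Small-gain_theorem_Operators_with_Order-uniform-conditions}.

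The second loop rests on the observation that (iii) and (iv) are just interior-point reformulations of the uniform small-gain condition in Theorem~\ref{thm:stability-for-pos-ops}(\ref{thm:stability-for-pos-ops:itm:uniform-small-gain}): for $x \in X^+ \setminus \{0\}$ and $z \in \intt X^+$, the inequality $Tx \ge x - \eta\norm{x}z$ means $(T - \id)x + \eta\norm{x}z \in X^+$, which forces $\dist((T-\id)x, X^+) \le \eta\norm{x}\norm{z}$; conversely, fixing $\varrho > 0$ with $z \ge u$ whenever $\norm{u} \le \varrho$, if $\dist((T-\id)x, X^+) < \eta\varrho\norm{x}$ then some $p \in X^+$ has $\norm{(T-\id)x - p} < \eta\varrho\norm{x}$, and $w := p - (T-\id)x$ satisfies $\eta\norm{x}z \ge w$, so $(T-\id)x + \eta\norm{x}z \ge p \in X^+$. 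Feeding the small-gain bound of Theorem~\ref{thm:stability-for-pos-ops} into the first direction gives (i)~$\Rightarrow$~(iii); (iii)~$\Rightarrow$~(iv) is trivial; and the second direction gives (iv)~$\Rightarrow$~(i).

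The substantive work is the dual condition, and it is here that I expect the main obstacle. The implication (v)~$\Rightarrow$~(ii) is short: if $0 \ne x' \in (X')^+$ satisfied $T'x' \ge x'$, then $\langle x', z - Tz\rangle \le 0$, which is impossible because a non-zero positive functional is strictly positive on $\intt X^+$. For (ii)~$\Rightarrow$~(i) I would argue by contraposition: assuming $\spr(T') = \spr(T) \ge 1$, apply Lemma~\ref{lem:pos-approx-ev} to $T'$ — legitimate because the normality and generation of $X^+$ make $(X')^+$ generating and normal — to get unit functionals $x'_n \in (X')^+$ with $T'x'_n - \spr(T')x'_n \to 0$; the interior point forces a uniform bound $\langle x'_n, e\rangle \ge \varrho > 0$, so any weak${}^*$-cluster point $x'$ of $(x'_n)$ is a non-zero positive functional, and passing to the weak${}^*$-limit in $T'x'_n - x'_n = (T'x'_n - \spr(T')x'_n) + (\spr(T')-1)x'_n$ (using weak${}^*$-continuity of $T'$) gives $T'x' - x' = (\spr(T')-1)x' \ge 0$, contradicting (ii). The same functional closes the last loop: (i)~$\Rightarrow$~(vii) holds because $\norm{T^k} \to 0$; (vii)~$\Rightarrow$~(viii) is immediate from Definition~\ref{def:strong-stability}; and if $\spr(T) \ge 1$, then $\langle x', T^k e\rangle \ge \langle x', e\rangle > 0$ for all $k$, so $\inf_k \norm{T^k e} > 0$ and (viii) fails. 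The crux throughout is the weak${}^*$-compactness argument for (ii)~$\Rightarrow$~(i): it is exactly the interior point that prevents the weak${}^*$-cluster point from collapsing to $0$, which is why this list of equivalences genuinely requires non-empty interior rather than just normality and generation.
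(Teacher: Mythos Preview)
Your proof is correct and covers all eight equivalences. The overall architecture is close to the paper's, but you deviate in three places worth noting. First, for (ii)~$\Rightarrow$~(i) the paper simply cites a Krein--Rutman-type result from Schaefer--Wolff (\cite[p.~315]{SchaeferWolff1999}) to produce a positive eigenvector of $T'$ for the eigenvalue $\spr(T')$; your weak${}^*$-compactness argument via Lemma~\ref{lem:pos-approx-ev} is a self-contained re-derivation of exactly that fact, and your observation that the interior point keeps the cluster point away from zero is the crux. Second, for the interior-point small-gain conditions (iii)/(iv), the paper defers to \cite[Proposition~7.4]{MKG20}, whereas you give the direct two-line translation between $\dist((T-\id)x,X^+)$ and the inequality $Tx \ge x - \eta\norm{x}z$; this is cleaner and avoids the external reference. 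Third, for (viii)~$\Rightarrow$~(i) you reuse the dual functional $x'$ with $T'x' \ge x'$ to bound $\norm{T^k e}$ from below, while the paper argues primally: it picks $z$ with $[-z,z] \supseteq$ the unit ball, finds $k$ with $C\norm{T^k z} \le \tfrac12$, and concludes $\norm{T^k} \le \tfrac12$ directly from normality. Both are short; yours has the aesthetic advantage of recycling the same construction, the paper's has the advantage of not depending on the dual argument at all. Your route (vi)~$\Rightarrow$~(i) is also more direct than the paper's, which detours through (vii) and (viii).
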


\begin{proof}
	\Implies{thm:stability-for-pos-ops-interior-point:item:stability}{thm:stability-for-pos-ops-interior-point:item:small-gain-dual} 
	If $x' \in (X')^+$ is non-zero and $T'x' \ge x'$, then we can iterate this inequality and obtain $(T')^kx' \ge x'$ for each $k \in \Z_+$. Hence, the sequence $\left((T')^kx'\right)_{k \in \Z_+}$ does not converge to $0$, so $\spr(T) = \spr(T') \ge 1$.
	
	\Implies{thm:stability-for-pos-ops-interior-point:item:small-gain-dual}{thm:stability-for-pos-ops-interior-point:item:stability} 
	Assume that $\spr(T) \ge 1$. Since the cone $X^+$ is normal and has non-empty interior, it follows that $\spr(T') = \spr(T)$ is an eigenvalue of the dual operator $T'$ with an eigenvector $x' \in (X')^+$ \cite[Corollary on p.\,315]{SchaeferWolff1999}. Hence, $T'x' = \spr(T)x' \ge x'$.
	
	\Implies{thm:stability-for-pos-ops-interior-point:item:stability}{thm:stability-for-pos-ops-interior-point:item:small-gain-interior-point-all}
	By Theorem~\ref{thm:stability-for-pos-ops} the condition $\spr(T)<1$ is equivalent to the uniform small-gain condition 
	Theorem~\ref{thm:stability-for-pos-ops}\eqref{thm:stability-for-pos-ops:itm:uniform-small-gain}, which is equivalent (for any fixed interior point $z$) due to \cite[Proposition 7.4]{MKG20} to the nonlinear version of the interior point small-gain condition, which reduces thanks to the linearity of $T$ to the condition \eqref{thm:stability-for-pos-ops-interior-point:item:small-gain-interior-point-all}, 
	using the argumentation as in Remark~\ref{rem:Nonlinear-uniform-SGC}.
	
	\Implies{thm:stability-for-pos-ops-interior-point:item:small-gain-interior-point-all}{thm:stability-for-pos-ops-interior-point:item:small-gain-interior-point-exists}
	This is clear.
	
	\Implies{thm:stability-for-pos-ops-interior-point:item:small-gain-interior-point-exists}{thm:stability-for-pos-ops-interior-point:item:stability} 
	The argument for this implication is similar as in the proof of \Implies{thm:stability-for-pos-ops-interior-point:item:stability}{thm:stability-for-pos-ops-interior-point:item:small-gain-interior-point-all}.
	
	\Implies{thm:stability-for-pos-ops-interior-point:item:stability}{thm:stability-for-pos-ops-interior-point:item:strictly-decreasing} 
	If $\spr(T)<1$, from Proposition~\ref{prop:spr-super-eigenvector} it follows that there is $\lambda \in (0,1]$ and $z\in\intt(X^+)$ such that $Tz \ll \lambda z \le z$.
	
	\Implies{thm:stability-for-pos-ops-interior-point:item:strictly-decreasing}{thm:stability-for-pos-ops-interior-point:item:strictly-decreasing-lam}
	Let $z\in \intt(X^+)$ be such that $z - Tz \in \intt(X^+)$. Then, according to 
	Lemma~\ref{prop:order-units}\eqref{prop:order-units:itm:order-unit}, there exists a number $\delta \in (0,1)$ such that $\delta z \le z - Tz$; hence, $0 \le Tz \le (1-\delta) z$. 
	
	\Implies{thm:stability-for-pos-ops-interior-point:item:strictly-decreasing-lam}{thm:stability-for-pos-ops-interior-point:item:strong-stability}
	By iterating the inequality $Tz \le \lambda z$ we obtain the inequality $0 \le T^n z \le \lambda^n z$ for each integer $n \in \Z_+$. For $x \in [0,z]$, this yields $0 \le T^n x \le \lambda^n z$ for each $n \in \Z_+$, so due to the normality of the cone we conclude that $T^n x \to 0$ as $n \to \infty$.
	
	But as $z$ is an interior point, the span of the order interval $[0,z]$ equals $X$ (Proposition~\ref{prop:order-units}\eqref{prop:order-units:itm:principal-ideal}), so $T^n x \to 0$ for each $x \in X$.
	
	\Implies{thm:stability-for-pos-ops-interior-point:item:strong-stability}{thm:stability-for-pos-ops-interior-point:item:weak-attractivity} 
	This is clear.
		
	\Implies{thm:stability-for-pos-ops-interior-point:item:weak-attractivity}{thm:stability-for-pos-ops-interior-point:item:stability}
	Since the cone $X^+$ is normal, there exists a constant $C \ge 0$ such that $\norm{x} \le C \norm{y}$ for all $x \in X$ and $y \in X^+$ that satisfy $x \in [-y,y]$ \cite[Theorem~2.38(1) and~(3)]{AliprantisTourky2007}. Moreover, it follows from Proposition~\ref{prop:order-units}\eqref{prop:order-units:itm:ball} that we can find an interior point $z$ of $X^+$ such that the order interval $[-z,z]$ contains the unit ball in $X$.
	
	According to property~\eqref{thm:stability-for-pos-ops-interior-point:item:weak-attractivity} we can find a time $k \in \Z_+$ such that $C \norm{T^k z} \le 1/2$. For this $k$, and for each $x \in X$ of norm at most $1$, we have
	\begin{align*}
		T^k x \in [-T^k z, T^k z],
	\end{align*}
	so $\norm{T^k x} \le C \norm{T^k z} \le 1/2$ for each $x$ in the unit ball of $X$. Hence, $\norm{T^k} \le 1/2$, which implies~\eqref{thm:stability-for-pos-ops-interior-point:item:stability}.
\end{proof}

It is particularly worthwhile to point out that condition~\eqref{thm:stability-for-pos-ops-interior-point:item:small-gain-dual} in Theorem~\ref{thm:stability-for-pos-ops-interior-point} is formulated in a non-uniform way (as opposed to the uniform small-gain condition in Theorem~\ref{thm:stability-for-pos-ops}\eqref{thm:stability-for-pos-ops:itm:uniform-small-gain}). As can be seen in the proof, this is because, on spaces whose cone is normal and has non-empty interior, the spectral radius of a positive operator is always an eigenvalue of the dual operator with a positive eigenvector. 
The implication \Implies{thm:stability-for-pos-ops-interior-point:item:small-gain-dual}{thm:stability-for-pos-ops-interior-point:item:stability} in the theorem is also implicitly contained in \cite[Theorem~19 and Lemma~5]{Karlin1959}.

\begin{example}
	\label{examp:Not all points are decaying} 
	Under the assumptions of Theorem~\ref{thm:stability-for-pos-ops-interior-point} the condition $\spr(T)<1$ does not imply that $Tz \ll z$ for \emph{all} interior points of $X^+$ 
	(while, according to assertion~\eqref{thm:stability-for-pos-ops-interior-point:item:strictly-decreasing} in the theorem, 
	this is true for at least \emph{one} interior point).
	For example, consider the operator
	\begin{align*}
		T:=
		\begin{pmatrix}
			\frac{1}{2} & 1           \\	
			0           & \frac{1}{2}
		\end{pmatrix}
		.
	\end{align*}
	on $\R^2$. We endow $\R^2$ with its standard cone, so $T$ is positive.
	
	Clearly, $\spr(T)=\frac{1}{2}$ and it is easy to construct,	in accordance with
	Theorem~\ref{thm:stability-for-pos-ops-interior-point}\eqref{thm:stability-for-pos-ops-interior-point:item:strictly-decreasing},
	a strictly decaying vector; for instance, we have $T(6, 2)^T = (5, 1)^T \ll (6, 2)^T$. 
	
	But at the same time the vector $(2, 2)^T$ -- which is an interior point the cone -- satisfies $T(2, 2)^T = (3,1)^T \not\ll (2,2)^T$.
\end{example}

\begin{remarks}[Computation of points of strict decay]
	\label{rem:Computation-of-points-of-strict-decay}
	\begin{enumerate}[\upshape (a)] 
		\item
		In some applications, in particular in Lyapunov-based small-gain theorems \cite{MNK21}, it is of interest to explicitly compute the number $\lambda$ and the corresponding vector $z$ as in \eqref{eq:point-of-strict-decay}, also called a \emph{point of strict decay}.
	Proposition~\ref{prop:spr-super-eigenvector} shows that one may pick any $\lambda\in (r(T),1)$, and $z := (\lambda \id - T)^{-1}y$ for any $y\in\intt(X^+)$, see \eqref{eq:Expression-point-of-strict-decay}.

		As $(\lambda \id - T)^{-1}$ is bijective from $X$ to $X$, it is an open map due to the open mapping theorem, i.e., $(\lambda \id - T)^{-1}$ maps open sets to open sets.
		This is a very nice property from the computational point of view, as even if we cannot compute $z = (\lambda \id - T)^{-1}y$ exactly for some $y\in\intt(X^+)$, 
		we nevertheless know that any sufficiently good approximation will also be a point of strict decay for the operator $T$.
		
		\item 
		If the operator $T$ is compact and \emph{strongly positive} in the sense that it maps $X^+ \setminus \{0\}$ into the interior of $X^+$, then the vector $z$ in~\eqref{eq:point-of-strict-decay} can be chosen to be an eigenvector of $T$ for the eigenvalue $\spr(T) < 1$.
		This follows from the Krein--Rutman theorem for strongly positive operators (see for instance \cite[Theorem~19.3(a)]{Deimling1985}).
	\end{enumerate}
\end{remarks}

\subsection{The quasi-compact case}
\label{subsection:positive-discrete:quasi-compact}

The characterisation of stability becomes considerably easier when the operator $T$ under consideration is compact or, more generally \emph{quasi-compact}. Here we call a bounded linear operator $T$ on a Banach space $X$ \emph{quasi-compact} if there exists an integer $n_0 \in \N$ and a compact operator $K$ on $X$ such that $\norm{T^{n_0}-K} < 1$ (alternatively, if there exists $n_1$ and a compact operator $K$ such that $\spr(T^{n_1}-K) < 1$). This is equivalent to saying that the equivalence class of $T$ in the \emph{Calkin algebra} $\calL(X) / \calK(X)$ -- where $\calK(X)$ denotes the ideal of compact operators on $X$ -- has spectral radius strictly less than $1$. The latter spectral radius is known to coincide with the so-called \emph{essential spectral radius} $\essSpr(T)$ of $T$. Hence, $T$ is quasi-compact if and only if $\essSpr(T) < 1$, and the latter condition means that on the unit circle, and outside of it, all spectral values of $T$ (if there exist any at all) are poles of the resolvent $(\argument - T)^{-1}$ with a finite-dimensional spectral space. Clearly, every compact operator and every power compact operator is quasi-compact.

Our next theorem gives additional stability criteria for positive linear operators in case that they are quasi-compact. In contrast to Theorem~\ref{thm:stability-for-pos-ops} we do not need the cone to be normal now, and moreover it suffices if the cone is total rather than generating. In case that the cone is normal and generating, though, the following criteria are complemented by those in Theorem~\ref{thm:stability-for-pos-ops}, of course. 

\begin{theorem} 
\label{thm:stability-for-pos-ops-quasi-compact}
	Let $(X,X^+)$ be an ordered Banach space with total cone and let $T \in \calL(X)$ be positive. If $T$ is quasi-compact, then the following assertions are equivalent:
	\begin{enumerate}[\upshape (i)]
		\item\label{thm:stability-for-pos-ops-quasi-compact:itm:stability} 
		\emph{Uniform exponential stability:} 
		The system~\eqref{eq:Gamma-discrete-time-system} satisfies the equivalent criteria of 
		Proposition~\ref{prop:UGAS_discrete_systems_and_Small-gain_theorem_Operators_with_Order-uniform-conditions}, 
		i.e., we have $\spr(T) < 1$.
		
		\item\label{thm:stability-for-pos-ops-quasi-compact:itm:positive-invertibility} 
		\emph{Positivity of the resolvent at $1$:}
		The operator $\id-T: X \to X$ is bijective and $(\id-T)^{-1}$ is positive. 
									
		\item\label{thm:stability-for-pos-ops-quasi-compact:itm:backwards-positive} 
		\emph{All sub-fixed vectors of $T$ are positive:}
		If $x \in X$ satisfies
		\begin{align*}
			Tx \le x,
		\end{align*}
		then $x \ge 0$.				

		\item\label{thm:stability-for-pos-ops-quasi-compact:itm:small-gain} 
		\emph{Small-gain condition:} 
		For each $0 \not= x \in X^+$ we have
		\begin{align*}
			Tx \not\ge x.
		\end{align*}
		
		\item\label{thm:stability-for-pos-ops-quasi-compact:itm:attractivity-on-cone} 
		\emph{Attractivity on the cone:} 
		For each $x\in X^+$ we have $T^kx \to 0$ as $k\to\infty$.
		
		\item\label{thm:stability-for-pos-ops-quasi-compact:itm:weak-attractivity-on-cone} 
		\emph{Weak attractivity on the cone:} 
		For each $x\in X^+$ we have $\inf_{k\geq 0}\norm{T^kx} = 0$.
	\end{enumerate}
\end{theorem}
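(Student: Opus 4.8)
The plan is to establish the equivalences along two cycles of implications sharing assertion~(i), namely
\[
	\text{(i)}\Rightarrow\text{(ii)}\Rightarrow\text{(iii)}\Rightarrow\text{(iv)}\Rightarrow\text{(i)}
	\qquad\text{and}\qquad
	\text{(i)}\Rightarrow\text{(v)}\Rightarrow\text{(vi)}\Rightarrow\text{(i)} .
\]
Most of the individual implications --- namely (i)$\Rightarrow$(ii), (ii)$\Rightarrow$(iii), (iii)$\Rightarrow$(iv), (i)$\Rightarrow$(v) and (v)$\Rightarrow$(vi) --- are routine and partly mirror the proof of Theorem~\ref{thm:stability-for-pos-ops}. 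For (i)$\Rightarrow$(ii) one uses that $\spr(T)<1$ makes $\id-T$ invertible with $(\id-T)^{-1}=\sum_{k\ge 0}T^{k}\ge 0$, the positivity because $X^{+}$ is closed. For (ii)$\Rightarrow$(iii): if $Tx\le x$ then $(\id-T)x\in X^{+}$, and applying the positive operator $(\id-T)^{-1}$ gives $x\ge 0$. For (iii)$\Rightarrow$(iv): if some $0\ne x\in X^{+}$ had $Tx\ge x$, then $T(-x)\le -x$, so (iii) would force $-x\in X^{+}$ and hence $x\in X^{+}\cap(-X^{+})=\{0\}$, a contradiction. Finally (i)$\Rightarrow$(v) follows from $\norm{T^{k}}\le Ma^{k}\to 0$, and (v)$\Rightarrow$(vi) is trivial.

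The weight of the theorem sits in the two implications that close the cycles, (iv)$\Rightarrow$(i) and (vi)$\Rightarrow$(i), and these are exactly the places where quasi-compactness enters. I would deduce both, by contraposition, from the single assertion that \emph{if $\spr(T)\ge 1$, then $\spr(T)$ is an eigenvalue of $T$ with an eigenvector $u\in X^{+}\setminus\{0\}$}. Indeed, granting this, $Tu=\spr(T)\,u\ge u$ contradicts the small-gain condition in~(iv), while $\norm{T^{k}u}=\spr(T)^{k}\norm{u}\ge\norm{u}>0$ for every $k$ contradicts~(vi). (Quasi-compactness really is needed here: the right shift on $\ell_{p}$ satisfies the small-gain condition in~(iv) yet has spectral radius $1$.)

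It remains to prove that assertion, which is a Krein--Rutman theorem for quasi-compact positive operators and uses only that the cone is total. Since $T$ is quasi-compact, $\essSpr(T)<1\le\spr(T)$; and since $\spr(T)\in\sigma(T)$ for positive operators on ordered Banach spaces with a total cone (a standard fact, provable by a Pringsheim-type argument applied to the positive resolvent $(\lambda\id-T)^{-1}=\sum_{k\ge 0}\lambda^{-k-1}T^{k}$), the number $r:=\spr(T)$ is a spectral value lying strictly outside the essential spectral radius. By the discussion of quasi-compact operators preceding the theorem, $r$ is therefore a pole of the resolvent with finite-dimensional spectral space; let $m\ge 1$ be its order. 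The leading Laurent coefficient $U:=\lim_{\lambda\downarrow r}(\lambda-r)^{m}(\lambda\id-T)^{-1}$ then exists, is nonzero (since $m$ is the exact pole order), is positive (being a limit of the positive operators $(\lambda-r)^{m}(\lambda\id-T)^{-1}$ with $\lambda>r$, and $X^{+}$ closed), and satisfies $(T-r\id)U=0$, i.e.\ $\operatorname{ran}U\subseteq\ker(r\id-T)$. As the cone is total, $U$ does not vanish on all of $X^{+}$ (otherwise it would vanish on the dense subspace $X^{+}-X^{+}$), so there is $y\in X^{+}$ with $u:=Uy\ne 0$; then $u\in U(X^{+})\subseteq X^{+}$ and $Tu=ru$, which is what we wanted. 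The main obstacle is precisely this Krein--Rutman step --- above all the (standard, but not entirely trivial) input that the spectral radius of a positive operator with a merely total cone is a spectral value; the remaining ingredients are just the Neumann series, the definition of pole order, and the meaning of quasi-compactness recalled before the theorem.
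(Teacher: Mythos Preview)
Your proof is correct and mirrors the paper's route almost exactly for the chain (i)$\Rightarrow$(ii)$\Rightarrow$(iii)$\Rightarrow$(iv)$\Rightarrow$(i) and for (i)$\Rightarrow$(v)$\Rightarrow$(vi). Two points of comparison are worth recording. First, for (iv)$\Rightarrow$(i) the paper simply invokes the Krein--Rutman theorem in the form of \cite[Corollary~2.2]{Nussbaum1981} (positive $T$ with $\spr(T)>\essSpr(T)$ has $\spr(T)$ as an eigenvalue with a positive eigenvector), whereas you reprove this from scratch via the leading Laurent coefficient of the resolvent; your argument is correct and more self-contained, and you are right that the nontrivial ingredient is $\spr(T)\in\sigma(T)$ for positive $T$ on a space with merely total cone --- the paper avoids isolating this by citing the black-box result. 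Second, and more interestingly, for the implication closing the second cycle the paper does \emph{not} use Krein--Rutman at all: it proves (vi)$\Rightarrow$(iv) by the elementary observation that $Tx\ge x$ with $x\in X^+$ forces $T^kx\ge x$ for all $k$, so a subsequence $T^{n_k}x\to 0$ yields $x\le 0$ and hence $x=0$. This argument uses neither quasi-compactness nor any spectral theory and shows that (vi)$\Rightarrow$(iv) holds for arbitrary positive $T$; your route via the positive eigenvector is valid but obscures this.
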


For finite-dimensional discrete-time systems the equivalence  ``\eqref{thm:stability-for-pos-ops-quasi-compact:itm:stability} $\Leftrightarrow$ \eqref{thm:stability-for-pos-ops-quasi-compact:itm:small-gain}'' of Theorem~\ref{thm:stability-for-pos-ops-quasi-compact} and the equivalence ``\eqref{thm:stability-for-pos-ops-interior-point:item:stability} $\Leftrightarrow$ \eqref{thm:stability-for-pos-ops-interior-point:item:strictly-decreasing-lam}'' of Theorem~\ref{thm:stability-for-pos-ops-interior-point} can be found in \cite[Lemma 2.0.1]{Rue07} and \cite[Lemma 1.1]{Rue10}.
In the finite-dimensional continuous-time case, an analogue of ``\eqref{thm:stability-for-pos-ops-quasi-compact:itm:stability} $\Leftrightarrow$\eqref{thm:stability-for-pos-ops-quasi-compact:itm:small-gain}''  can be found in \cite[Theorem~1.4]{Stern1982}. 

\begin{proof}[Proof of Theorem~\ref{thm:stability-for-pos-ops-quasi-compact}]
	\Implies{thm:stability-for-pos-ops-quasi-compact:itm:stability}{thm:stability-for-pos-ops-quasi-compact:itm:positive-invertibility} 
	The argument is the same as in the proof of Theorem~\ref{thm:stability-for-pos-ops}.
	
	\Implies{thm:stability-for-pos-ops-quasi-compact:itm:positive-invertibility}{thm:stability-for-pos-ops-quasi-compact:itm:backwards-positive} 
	If $Tx \le x$, then $(\id-T)x \ge 0$ and hence $x \ge 0$ by the positivity of $(\id-T)^{-1}$.
		
	\Implies{thm:stability-for-pos-ops-quasi-compact:itm:backwards-positive}{thm:stability-for-pos-ops-quasi-compact:itm:small-gain}
	Assume that~(\ref{thm:stability-for-pos-ops-quasi-compact:itm:small-gain}) fails, i.e., there is a non-zero vector $x \in X^+$ such that $Tx \ge x$. Then $T(-x) \le -x$. But as $x$ is non-zero, the vector $-x$ is not positive; this contradicts~(\ref{thm:stability-for-pos-ops-quasi-compact:itm:backwards-positive}).
	
	\Implies{thm:stability-for-pos-ops-quasi-compact:itm:small-gain}{thm:stability-for-pos-ops-quasi-compact:itm:stability}
	Assume that $\spr(T) \ge 1$. Then we have, in particular, $\spr(T) > \essSpr(T)$. By the Krein--Rutman theorem -- or more precisely, by the version of this theorem for operators that satisfy $\spr(T) > \essSpr(T)$, see for instance \cite[Corollary~2.2]{Nussbaum1981} -- it follows that $\spr(T)$ is an eigenvalue of $T$ with an eigenvector $x \in X^+ \setminus \{0\}$. Hence,
	\begin{align*}
		Tx = \spr(T)x \ge x,
	\end{align*}
	which contradicts~(\ref{thm:stability-for-pos-ops-quasi-compact:itm:small-gain}).

	\Implies{thm:stability-for-pos-ops-quasi-compact:itm:stability}{thm:stability-for-pos-ops-quasi-compact:itm:attractivity-on-cone} This is clear.

	\Implies{thm:stability-for-pos-ops-quasi-compact:itm:attractivity-on-cone}{thm:stability-for-pos-ops-quasi-compact:itm:weak-attractivity-on-cone} This is clear.

	\Implies{thm:stability-for-pos-ops-quasi-compact:itm:weak-attractivity-on-cone}{thm:stability-for-pos-ops-quasi-compact:itm:small-gain}
	Assume that there is $x \in X^+$ such that $x\neq 0$ and $Tx \geq x$. By monotonicity of $T$ we have that 
$T^k x \geq x$ for all $k \in \Z_+$. 
	In view of weak attractivity, there is an unbounded monotone sequence $(n_k)_{k\in\N}$ such that $T^{n_k}x\to 0$ as $k\to\infty$. 
	Hence $0 = \lim_{k\to\infty}T^{n_k}x \geq x$, which implies that $x = 0$, a contradiction.
\end{proof}

Again, we note that all conditions in Theorem~\ref{thm:stability-for-pos-ops-quasi-compact} can also be replaced with analogous conditions for the dual operator; this is due to the simple fact that an operator $T$ is quasi-compact if and only if its dual operator $T'$ is quasi-compact.

\begin{remark}
\label{rem:positive-invertibility-and-quasicompact} 
	The equivalence of items \eqref{thm:stability-for-pos-ops-quasi-compact:itm:positive-invertibility} and 
\eqref{thm:stability-for-pos-ops-quasi-compact:itm:backwards-positive} in Theorem~\ref{thm:stability-for-pos-ops-quasi-compact} holds for the quasicompact operators $T$ also without the assumption of the totality of the cone. 
	The proof of the implication 
	\Implies{thm:stability-for-pos-ops-quasi-compact:itm:positive-invertibility}{thm:stability-for-pos-ops-quasi-compact:itm:backwards-positive} 
	is the same as in the proof of Theorem~\ref{thm:stability-for-pos-ops-quasi-compact}, 
	so let us show the converse implication 
	\Implies{thm:stability-for-pos-ops-quasi-compact:itm:backwards-positive}{thm:stability-for-pos-ops-quasi-compact:itm:positive-invertibility}:

	Assertion~\eqref{thm:stability-for-pos-ops-quasi-compact:itm:backwards-positive} can be rephrased by saying that the operator $\id - T$ is ``inversely positive'' in the sense that $(\id-T)x \ge 0$ implies $x \ge 0$. Let $x \in X$ be such that $(\id-T)x = 0$. By inverse positivity we have $x \geq 0$. But also $(\id-T)(-x) = 0$, and thus $-x\geq 0$; so $x = 0$, which means that $\id - T$ is injective.
		
	Since $T$ is quasi-compact, the operator $\id-T$ is a Fredholm operator of index $0$ (this follows, for instance, from \cite[Theorem~XI.5.2]{GohbergGoldbergKaashoek1990} and from the fact that the Fredholm index is constant on connected sets \cite[Theorem~XI.4.1]{GohbergGoldbergKaashoek1990}). Hence, $\id-T$ is actually bijective. Since $(\id-T)x \ge 0$ implies $x \ge 0$, the inverse $(\id-T)^{-1}$ is positive. 
	\hfill $\qed$
\end{remark}

Without the assumption $\essSpr(T) < 1$, none of the conditions~\eqref{thm:stability-for-pos-ops-quasi-compact:itm:backwards-positive} or~\eqref{thm:stability-for-pos-ops-quasi-compact:itm:small-gain} in Theorem~\ref{thm:stability-for-pos-ops-quasi-compact} is sufficient to ensure that $\spr(T) < 1$. Here are simple counterexamples:

\begin{example}
\label{examp:Multiplication operator} 
	Let $X$ denote the Banach lattice $C_b([0,+\infty))$ of bounded continuous functions with pointwise order (compare Example~\ref{ex:continuous-functions}), and let $T: X \to X$ be given for each $f \in X$ by
	\begin{align*}
		(Tf)(\omega) = (1-e^{-\omega}) f(\omega) \quad \text{for all } \omega \in [0,+\infty).
	\end{align*}
	Then $T$ is positive, has spectrum $[0,1]$ and hence spectral radius $1$; but $T$ satisfies 
	condition~\eqref{thm:stability-for-pos-ops-quasi-compact:itm:backwards-positive}, 
	as well as the property $(T(x))(s) < x(s)$ for all $x\in X^+$ and $s\geq 0$, which is a stronger property than 
	the small-gain condition \eqref{thm:stability-for-pos-ops-quasi-compact:itm:small-gain} 
	in Theorem~\ref{thm:stability-for-pos-ops-quasi-compact}.
	
	The theorem is not applicable since the essential spectral radius of $T$ is equal to $1$. 
	Also note that $Tx \not\ll x$ for all $x\in X^+$.
\end{example}

\begin{example}
	\label{ex:SS_does_not_imply_ExpSt}
	Consider the Banach lattice $X:=\ell_\infty$, ordered by its usual cone (see Example~\ref{ex:sequence-spaces}\eqref{ex:sequence-spaces:itm:ell_p}). Recall that the cone $X^+$ is normal (as it is in every Banach lattice) and has non-empty interior.
	We consider the system 
	\begin{align*}
		x(k+1)= 2R x(k),\quad k\in\Z_+,
	\end{align*}
	where $R$ is the right shift on $X$, i.e., $R$ acts on $x=(x_0,x_1,x_2,\ldots) \in \ell_\infty$ as  $Rx:=(0,x_0,x_1,x_2,\ldots)$. 
	Clearly, $R$ is a positive operator in $\calL(X)$. 
	
	Consider an arbitrary strictly positive diagonal operator $D \in \calL(X)$, defined for $x := (x_i)_{i\in\Z_+} \in X$ by $Dx:=(d_i x_i)_{i\in\Z_+}$, where $(d_i)_{i\in\Z_+}$ is a sequence of a real numbers that satisfy $0 < d_i \leq M$ for  a fixed constant $M > 0$ and all indices $i\in\Z_+$.
	
	Let $x=(x_1,x_2,\ldots) \in X^+ \setminus \{0\}$ and let $i$ be the index of the first non-zero component of $x$ (which is well-defined and finite as $x\in X^+$ and $x\neq 0$). Then the components of $2R(I+D)x$ with indices $j=0,\ldots,i$ are equal to $0$. This shows that 
	\begin{align*}
		2R(I+D)x \not\geq x \qquad \text{for all } x \in X^+ \setminus \{0\},
	\end{align*}
	which implies so-called \emph{strong small-gain condition} for the operator $2R$ used in e.g. \cite[p. 11]{DRW10}, \cite{DMS19a}, \cite{MKG20}; in particular, this implies the small-gain condition for $T:=2R$ in Theorem~\ref{thm:stability-for-pos-ops-quasi-compact}\eqref{thm:stability-for-pos-ops-quasi-compact:itm:small-gain}.
	
	The strong small-gain condition says that there are positive perturbations of the operator, under which the operator still satisfies the small-gain condition in Theorem~\ref{thm:stability-for-pos-ops-quasi-compact}\eqref{thm:stability-for-pos-ops-quasi-compact:itm:small-gain}.
	In this way it resembles the \emph{robust small-gain condition} in Theorem~\ref{thm:stability-for-pos-ops}\eqref{thm:stability-for-pos-ops:itm:perturbed-small-gain}, but note that while in the robust small-gain condition the operator is being disturbed by arbitrary additive small enough perturbations, in the strong small-gain condition above the operator is disturbed by multiplicative perturbations of a specific form. 
	
	However, $R$ is not quasicompact, and thus Theorem~\ref{thm:stability-for-pos-ops-quasi-compact} is not applicable. In fact, 
	$R$ is an \emph{isometry} as $\norm{Rx} = \norm{x}$ for each $x \in X$. Thus, also $\norm{(2R)^kx} = 2^k\norm{x} \to \infty$ as $k\to \infty$ provided that $x\neq 0$. This also shows that the robust small-gain condition is much stronger than the strong small-gain condition.
	
	Finally, note (see e.g.\ \cite[Example B.7]{HMM13}) that $\sigma(2R) = \overline{B(0,2)}$, where $B(0,2)$ is the open ball of radius $2$ with the center at $0$ in the complex plane; at the same time the point spectrum of $2R$ is empty (which already implies that the claim of Krein--Rutman theorem does not hold for $2R$).
\end{example}

\section{Stability for discrete-time linear systems: a brief tour}
\label{section:discrete-time-tour}

In this section, we give a brief overview of various known criteria for uniform exponential stability of discrete-time linear systems without any positivity assumptions. Besides comprising many important uniform stability criteria, which are currently scattered throughout the research papers, this section aims to present sufficient context for the novel results of the previous section.
We refer to \cite{Eisner2010} for many results about stability properties which are weaker than uniform exponential stability, such as {Lyapunov stability} (or {power boundedness}), {strong stability}, {weak stability}, {almost weak stability}, etc.

\subsection{Strong and weak convergence with rates}
\label{subsection:discrete:rates}

Let us briefly remark that there are also weaker stability properties of the system~\eqref{eq:Gamma-discrete-time-system} than those listed in Proposition~\ref{prop:UGAS_discrete_systems_and_Small-gain_theorem_Operators_with_Order-uniform-conditions}. One of these properties is made precise in the following definition:

\begin{definition}
	\label{def:strong-stability} 
	Let $X$ be a Banach space and let $T \in \calL(X)$.
	The system \eqref{eq:Gamma-discrete-time-system} is called \emph{strongly stable} if $T^k x \to 0$ as $k \to \infty$ for each $x \in X$.
\end{definition}

It is easy to find examples of systems \eqref{eq:Gamma-discrete-time-system} which are strongly stable but not uniformly exponentially stable. For instance, this happens if we set $X = \ell_2$ and define $T \in \calL(X)$ by
\begin{align*}
	(Tf)_n = \alpha_n f_n,\quad n \in \Z_+
\end{align*}
for each $f =(f_n)_n \in \ell_2$, where $(\alpha_n)_{n \in \Z_+}$ is a sequence in $[0,1)$ converging to $1$.

On the other hand, if the convergence of $T^kx$ to $0$ is subject to a certain rate, this already implies uniform stability, as explained in the following proposition.

\begin{proposition}
	\label{prop:discrete-time:strong-rates}
	Let $X$ be a Banach space and let $T \in \calL(X)$. 
	The following assertions are equivalent:
	\begin{enumerate}[\upshape (i)]
		\item\label{itm:discrete-time-strong-rates:stability} 
		The system~\eqref{eq:Gamma-discrete-time-system} satisfies the equivalent criteria of 
		Proposition~\ref{prop:UGAS_discrete_systems_and_Small-gain_theorem_Operators_with_Order-uniform-conditions}, 
		i.e., we have $\spr(T) < 1$.
		
		\item\label{itm:discrete-time-strong-rates:rate} 
		There exists a sequence of real numbers $0 \le a_k \to 0$ with the following property: for each $x \in X$ we can find a real number $c \ge 0$ such that
		\begin{align*}
			\norm{T^k x} \le c a_k \quad \text{for all } k \in \Z_+.
		\end{align*}
		
		\item\label{itm:discrete-time-strong-rates:integrability} 
		There exists an integrability index $p \in [1,\infty)$ such that $\sum_{k=0}^\infty \norm{T^k x}^p < \infty$ for each $x \in X$.
		
		\item\label{itm:discrete-time-strong-rates:van-neerven}
		There exists a non-decreasing function $\alpha: \R_+ \to \R_+$ 
		which is strictly positive on $(0,\infty)$ such that 
		\begin{align}
			\sum_{k=0}^\infty \alpha(\norm{T^k x}) < \infty
		\end{align}
		for each $x \in X$ of norm at most $1$.
	\end{enumerate}
\end{proposition}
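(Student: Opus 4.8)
The plan is to prove the cycle \Implies{itm:discrete-time-strong-rates:stability}{itm:discrete-time-strong-rates:rate}, \Implies{itm:discrete-time-strong-rates:rate}{itm:discrete-time-strong-rates:stability}, \Implies{itm:discrete-time-strong-rates:stability}{itm:discrete-time-strong-rates:integrability}, \Implies{itm:discrete-time-strong-rates:integrability}{itm:discrete-time-strong-rates:van-neerven}, \Implies{itm:discrete-time-strong-rates:van-neerven}{itm:discrete-time-strong-rates:stability}; since the resulting diagram of implications is strongly connected, this yields the equivalence of all four assertions. Three of these implications are routine. If $\spr(T) < 1$, pick $M > 0$ and $a \in (0,1)$ with $\norm{T^k} \le M a^k$ for all $k \in \Z_+$ (such constants exist by Proposition~\ref{prop:UGAS_discrete_systems_and_Small-gain_theorem_Operators_with_Order-uniform-conditions}); then \eqref{itm:discrete-time-strong-rates:rate} holds with $a_k := a^k$ and $c := M\norm{x}$, and \eqref{itm:discrete-time-strong-rates:integrability} holds for every $p \in [1,\infty)$ by summing the geometric series. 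Finally, \Implies{itm:discrete-time-strong-rates:integrability}{itm:discrete-time-strong-rates:van-neerven} is obtained by taking $\alpha(t) := t^p$, which is non-decreasing on $\R_+$ and strictly positive on $(0,\infty)$.

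For \Implies{itm:discrete-time-strong-rates:rate}{itm:discrete-time-strong-rates:stability} I would use the uniform boundedness principle. If $a_{k_0} = 0$ for some $k_0$ then $T^{k_0} = 0$ and there is nothing to prove, so assume $a_k > 0$ for all $k$ and put $S_k := a_k^{-1}T^k \in \calL(X)$. Assertion~\eqref{itm:discrete-time-strong-rates:rate} says exactly that $\sup_k \norm{S_k x} < \infty$ for each $x \in X$, so by the uniform boundedness principle there is $C \ge 0$ with $\norm{T^k} \le C a_k$ for all $k$; as $a_k \to 0$, this gives $\norm{T^k} \to 0$, hence $\norm{T^{k_0}} < 1$ for some $k_0$ and therefore $\spr(T) \le \norm{T^{k_0}}^{1/k_0} < 1$. (One can also prove \Implies{itm:discrete-time-strong-rates:integrability}{itm:discrete-time-strong-rates:stability} directly, by showing that $x \mapsto \big(\sum_k \norm{T^k x}^p\big)^{1/p}$ is a complete norm on $X$, hence equivalent to the original norm by the closed graph theorem, and then iterating; but since \eqref{itm:discrete-time-strong-rates:integrability} implies \eqref{itm:discrete-time-strong-rates:van-neerven}, this case is covered by the argument below.)

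The substantive part is \Implies{itm:discrete-time-strong-rates:van-neerven}{itm:discrete-time-strong-rates:stability}, a discrete van Neerven-type sharpening of the Datko--Pazy theorem. The intended steps are: (1) From $\sum_k \alpha(\norm{T^k x}) < \infty$ (for $\norm{x} \le 1$), $\alpha$ non-decreasing and $\alpha > 0$ on $(0,\infty)$, deduce $\norm{T^k x} \to 0$ for every $x$ -- otherwise some subsequence $\norm{T^{k_j}x}$ would stay bounded below by $\varepsilon > 0$, forcing $\alpha(\norm{T^{k_j}x}) \ge \alpha(\varepsilon) > 0$ and contradicting summability; in particular $T$ is power bounded by the uniform boundedness principle, so $C := \sup_k \norm{T^k} < \infty$. (2) Replace $\alpha$ by a non-decreasing, lower semicontinuous minorant that is still strictly positive on $(0,\infty)$ (this only strengthens the summability hypothesis), so that $\beta(x) := \sum_k \alpha(\norm{T^k x})$ becomes lower semicontinuous; a Baire category argument on the closed unit ball then yields $x_0 \in X$, $r > 0$ and $N_0 \ge 0$ with $\beta(y) \le N_0$ for all $\norm{y - x_0} \le r$, and the symmetrisation $\norm{T^k w} \le \max\{\norm{T^k(x_0+w)}, \norm{T^k(x_0-w)}\}$ upgrades this to $\beta(w) \le 2N_0$ for all $\norm{w} \le r$. (3) For $\norm{w} \le r$ and $0 \le j \le k$ one has $\norm{T^k w} \le C\norm{T^j w}$, hence $(k+1)\,\alpha\big(C^{-1}\norm{T^k w}\big) \le \sum_{j=0}^{k}\alpha(\norm{T^j w}) \le 2N_0$; since $\alpha$ is non-decreasing and strictly positive on $(0,\infty)$, this forces $\norm{T^k w} \to 0$ uniformly over $\norm{w} \le r$, so $\norm{T^k} \le 1/2$ for some $k$, and thus $\spr(T) < 1$.

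I expect \Implies{itm:discrete-time-strong-rates:van-neerven}{itm:discrete-time-strong-rates:stability} to be the main obstacle; the delicate points are the regularisation of $\alpha$ needed to make the Baire category step rigorous, and the passage from the purely qualitative hypotheses on $\alpha$ to the uniform-in-$w$ decay of $\norm{T^k w}$ in step~(3).
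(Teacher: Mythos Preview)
Your cycle of implications and the arguments for the routine steps \Implies{itm:discrete-time-strong-rates:stability}{itm:discrete-time-strong-rates:rate}, \Implies{itm:discrete-time-strong-rates:rate}{itm:discrete-time-strong-rates:stability}, \Implies{itm:discrete-time-strong-rates:stability}{itm:discrete-time-strong-rates:integrability} and \Implies{itm:discrete-time-strong-rates:integrability}{itm:discrete-time-strong-rates:van-neerven} match the paper's proof essentially verbatim (the paper uses the same uniform boundedness argument for \Implies{itm:discrete-time-strong-rates:rate}{itm:discrete-time-strong-rates:stability}, with the same reduction to $a_k\neq 0$). The one genuine difference is the hard implication \Implies{itm:discrete-time-strong-rates:van-neerven}{itm:discrete-time-strong-rates:stability}: the paper does not prove it but simply cites \cite[Theorem~0.1(i)]{vanNeerven1995}, whereas you outline a self-contained argument via lower-semicontinuous regularisation of $\alpha$, Baire category on the closed unit ball, symmetrisation, and the power-boundedness estimate $(k+1)\,\alpha\bigl(C^{-1}\norm{T^k w}\bigr)\le 2N_0$. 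Your sketch is correct: taking $\tilde\alpha(t):=\alpha(t{-})$ produces a non-decreasing lower semicontinuous minorant that is still strictly positive on $(0,\infty)$, so $\beta$ becomes lower semicontinuous and the Baire step is legitimate; the displayed inequality then forces $\norm{T^k}\to 0$ because $\alpha(\varepsilon)>0$ for every $\varepsilon>0$. What your approach buys is a fully self-contained proof (essentially reproducing the discrete version of van~Neerven's argument); what the paper's citation buys is brevity and the remark that the cited result holds under even weaker assumptions than~(\ref{itm:discrete-time-strong-rates:van-neerven}).
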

\begin{proof}
	\Implies{itm:discrete-time-strong-rates:stability}{itm:discrete-time-strong-rates:rate} This is obvious; just take $a_k = \norm{T^k}$.
	
	\Implies{itm:discrete-time-strong-rates:rate}{itm:discrete-time-strong-rates:stability} We may assume that $a_k \not= 0$ for each $k$. Then the sequence $(T^k/a_k)_{k \in \Z_+}$ is bounded in operator norm due to the uniform boundedness principle, and hence the sequence $(T^k)_{k \in \Z_+}$ converges to $0$ with respect to the operator norm.
	
	\Implies{itm:discrete-time-strong-rates:stability}{itm:discrete-time-strong-rates:integrability} This follows immediately from the uniform exponential stability of $T$.
	
	\Implies{itm:discrete-time-strong-rates:integrability}{itm:discrete-time-strong-rates:van-neerven} 
	This implication is obvious.
	
	\Implies{itm:discrete-time-strong-rates:van-neerven}{itm:discrete-time-strong-rates:stability} 
	This result can be found -- under even weaker assumptions -- in \cite[Theorem~0.1(i)]{vanNeerven1995}.
\end{proof}

The equivalence between items \eqref{itm:discrete-time-strong-rates:stability} and \eqref{itm:discrete-time-strong-rates:integrability} in Proposition~\ref{prop:discrete-time:strong-rates} is known as Datko-Pazy lemma.

The conditions for uniform stability in Proposition~\ref{prop:discrete-time:strong-rates} can actually be further weakened: it suffices if we replace either the strong convergence that takes place with respect to a rate in assertion~\eqref{itm:discrete-time-strong-rates:rate}, the $p$-integrability of the orbits in assertion~\eqref{itm:discrete-time-strong-rates:integrability}, or the summability condition in assertion~\eqref{itm:discrete-time-strong-rates:van-neerven}, with corresponding \emph{weak} properties -- where \emph{weak} means that we test against functionals. Results of this type can, for instance, be found in \cite{vanNeerven1995, Mueller2001, Glueck2015}. Thus, one also obtains a characterisation of $\spr(T) < 1$ in terms of certain \emph{weak stability} properties of the operator $T$.

We finally note that, if $(X,X^+)$ is an ordered Banach space with a generating cone, then assertions~(\ref{itm:discrete-time-strong-rates:rate}) and~(\ref{itm:discrete-time-strong-rates:integrability}) in Proposition~\ref{prop:discrete-time:strong-rates} are clearly equivalent to the same conditions for vectors $x \in X^+$ only.

\subsection{Lyapunov functions}
\label{subsection:Lyapunov:functions}

\emph{Lyapunov functions} are an important tool for stability analysis. For their discussion, it is convenient to introduce the following classes of comparison functions.
We say that $\alpha:\R_+\to\R_+$ belongs to $\K$ if $\alpha(0)=0$, $\alpha$ is strictly increasing and continuous.
We say that $\alpha \in \Kinf$ if $\alpha\in\K$ and $\alpha(r)\to\infty$ as $r\to\infty$ (i.e., $\alpha$ is a homeomorphism on $\R_+$).

\begin{definition}
	\label{def:Lyapunov-function} 
	Let $X$ be a Banach space and let $T \in \calL(X)$. 
	A function $V:X \to \R_+$ is called a \emph{Lyapunov function} 
	for the system \eqref{eq:Gamma-discrete-time-system} if 
	there are $\psi_1, \psi_2 \in \Kinf$, and $\alpha\in\K$ such that the inequalities
	\begin{align*}
		\psi_1(\norm{x}) \leq V(x) \leq \psi_2(\norm{x}) \qquad \text{and} \qquad V(Tx)-V(x) \leq -\alpha(\norm{x})
	\end{align*}
	hold for each $x \in X$.
\end{definition}

Lyapunov functions can be used to characterize uniform stability. 
For linear discrete-time systems, even a weaker version of a Lyapunov function suffices 
for this purposes, as the following proposition shows:

\begin{proposition}
	\label{prop:UGAS_discrete_systems_Lyapunov-type}
	Let $X$ be a Banach space and $T\in \calL(X)$. 
	The following assertions are equivalent:
	\begin{enumerate}[\upshape (i)]
		\item\label{itm:Ulin-SGC-criteria-stability}
		The system~\eqref{eq:Gamma-discrete-time-system} satisfies the equivalent criteria of 
		Proposition~\ref{prop:UGAS_discrete_systems_and_Small-gain_theorem_Operators_with_Order-uniform-conditions}, 
		i.e., we have $\spr(T) < 1$.

		\item\label{itm:Ulin-SGC-criteria-4}
		There is an equivalent norm $\norm{\argument}_{\operatorname{equ}}$ on $X$ such that $T$ is a \emph{strict contraction} with respect to $\norm{\argument}_{\operatorname{equ}}$, i.e., there is a number  $a\in [0,1)$ such that 
		\begin{align*}
			\norm{Tx}_{\operatorname{equ}}  \leq  a \norm{x}_{\operatorname{equ}}
		\end{align*}
		for all $x \in X$.

		\item\label{itm:Ulin-SGC-criteria-5} 
		There is a Lyapunov function for the system~\eqref{eq:Gamma-discrete-time-system}.
		
		\item\label{itm:Ulin-very-weak-Lyapunov-function} 
		There is a function $V: X \to \R_+$ and a function $\alpha\in\K$ such that
		\begin{align*}
			V(Tx) - V(x) \le -\alpha(\norm{x})
		\end{align*}
		for each $x \in X$.
	\end{enumerate}
\end{proposition}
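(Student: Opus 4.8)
The plan is to prove the cyclic chain of implications (i) $\Rightarrow$ (ii) $\Rightarrow$ (iii) $\Rightarrow$ (iv) $\Rightarrow$ (i). The first two and the third implication are elementary renorming and rewriting steps; the only genuinely substantial step is (iv) $\Rightarrow$ (i), and there I would reduce to the summability condition in Proposition~\ref{prop:discrete-time:strong-rates} rather than reprove the underlying theorem of van Neerven from scratch.

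For (i) $\Rightarrow$ (ii): assuming $\spr(T) < 1$, I would invoke Proposition~\ref{prop:UGAS_discrete_systems_and_Small-gain_theorem_Operators_with_Order-uniform-conditions} to get constants $M > 0$ and $a \in (\spr(T),1)$ with $\norm{T^k} \le M a^k$ for all $k \in \Z_+$, and then define $\norm{x}_{\operatorname{equ}} := \sup_{k \in \Z_+} a^{-k} \norm{T^k x}$. This is readily seen to be a norm (a supremum of seminorms), it is finite because $a^{-k} \norm{T^k x} \le M \norm{x}$, and it satisfies $\norm{x} \le \norm{x}_{\operatorname{equ}} \le M \norm{x}$, so it is equivalent to $\norm{\argument}$. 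An index shift then gives $\norm{Tx}_{\operatorname{equ}} = \sup_{k \ge 1} a^{-(k-1)} \norm{T^k x} = a \sup_{k \ge 1} a^{-k} \norm{T^k x} \le a \norm{x}_{\operatorname{equ}}$, so $T$ is a strict contraction with respect to $\norm{\argument}_{\operatorname{equ}}$.

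For (ii) $\Rightarrow$ (iii): I would simply take $V(x) := \norm{x}_{\operatorname{equ}}$. Equivalence of norms supplies constants $0 < m \le M$ with $m\norm{x} \le V(x) \le M\norm{x}$, so the two-sided bound of Definition~\ref{def:Lyapunov-function} holds with $\psi_1 = m\,\id$ and $\psi_2 = M\,\id$ in $\Kinf$; and $V(Tx) - V(x) \le (a-1) V(x) \le -(1-a) m \norm{x}$ gives the decay estimate with $\alpha = (1-a)m\,\id \in \K$. The implication (iii) $\Rightarrow$ (iv) needs no argument at all, a Lyapunov function being in particular a function of the kind demanded in (iv).

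For (iv) $\Rightarrow$ (i): fix $x \in X$ and apply the inequality in (iv) with $T^k x$ in place of $x$ to obtain $V(T^{k+1}x) - V(T^k x) \le -\alpha(\norm{T^k x})$; summing over $k = 0, \dots, n-1$ and using $V \ge 0$ telescopes to $\sum_{k=0}^{n-1} \alpha(\norm{T^k x}) \le V(x)$ for every $n$, hence $\sum_{k=0}^\infty \alpha(\norm{T^k x}) \le V(x) < \infty$. Since $\alpha \in \K$ is in particular non-decreasing and strictly positive on $(0,\infty)$, this is exactly the hypothesis of Proposition~\ref{prop:discrete-time:strong-rates}\eqref{itm:discrete-time-strong-rates:van-neerven} (it holds for all $x \in X$, so in particular for those of norm at most $1$), and that proposition yields $\spr(T) < 1$. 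The main obstacle is therefore not in our own argument, which is just a telescoping sum, but lies inside Proposition~\ref{prop:discrete-time:strong-rates}, which ultimately rests on van Neerven's theorem; the only point requiring care on our side is to check that the comparison class $\K$ appearing in Definition~\ref{def:Lyapunov-function} feeds correctly into the weaker hypothesis of that proposition.
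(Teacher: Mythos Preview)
Your proof is correct and follows essentially the same route as the paper: the same cyclic chain (i) $\Rightarrow$ (ii) $\Rightarrow$ (iii) $\Rightarrow$ (iv) $\Rightarrow$ (i), the same renorming construction for (i) $\Rightarrow$ (ii) (your $\sup_k a^{-k}\norm{T^k x}$ is exactly the paper's $\sup_k \norm{(sT)^k x}$ with $s = 1/a$), the observation that the new norm is itself a Lyapunov function, and the same telescoping sum in (iv) $\Rightarrow$ (i) followed by an appeal to Proposition~\ref{prop:discrete-time:strong-rates}\eqref{itm:discrete-time-strong-rates:van-neerven}.
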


Note that, in case that condition~\eqref{itm:Ulin-SGC-criteria-4} in the above proposition is satisfied, then the norm $\norm{\argument}_{\operatorname{equ}}$ is actually a Lyapunov function for the system~\eqref{eq:Gamma-discrete-time-system}.

\begin{proof}[Proof of Proposition~\ref{prop:UGAS_discrete_systems_Lyapunov-type}]
	\Implies{itm:Ulin-SGC-criteria-stability}{itm:Ulin-SGC-criteria-4} 
	This is a well-known argument: choose a real number $s > 1$ such that the operator $sT$ still satisfies the equivalent conditions of Proposition~\ref{prop:UGAS_discrete_systems_and_Small-gain_theorem_Operators_with_Order-uniform-conditions}. Then we can define the desired equivalent norm on $X$ by 
	\begin{align*}
		\norm{x}_{\operatorname{equ}} = \sup_{k \in \Z_+} \norm{(sT)^k x}
	\end{align*}
	for each $x \in X$. Indeed, the operator $sT$ has operator norm at most $1$ with respect to this new norm, so $T$ is a strict contraction.
		
	\Implies{itm:Ulin-SGC-criteria-4}{itm:Ulin-SGC-criteria-5} 
	This is clear since the equivalent norm $\norm{\argument}_{\operatorname{equ}}$ in~\eqref{itm:Ulin-SGC-criteria-4} is itself a Lyapunov function.
	
	\Implies{itm:Ulin-SGC-criteria-5}{itm:Ulin-very-weak-Lyapunov-function} 
	This is obvious.
	
	\Implies{itm:Ulin-very-weak-Lyapunov-function}{itm:Ulin-SGC-criteria-stability} 
	For each $n \in \N$ and each $x \in X$ we obtain from~\eqref{itm:Ulin-very-weak-Lyapunov-function}
	\begin{align*}
		V(T^n x) - V(x) 
		= 
		\sum_{k=1}^n \big( V(T^k x) - V(T^{k-1} x) \big)
		\le 
		\sum_{k=1}^n -\alpha(\norm{T^{k-1}x})
	\end{align*}
	and hence,
	\begin{align*}
		\sum_{k=1}^n \alpha(\norm{T^{k-1}x}) \le V(x) - V(T^n x) \le V(x).
	\end{align*}
	Thus, we have $\sum_{k=0}^\infty \alpha(\norm{T^k x}) < \infty$ for each $x \in X$.
	By Proposition~\ref{prop:discrete-time:strong-rates}, this implies that $\spr(T) < 1$.
\end{proof}

The following remark sheds some additional light on the proof of the implication \Implies{itm:Ulin-very-weak-Lyapunov-function}{itm:Ulin-SGC-criteria-stability} in the previous proposition.

\begin{remark}
	If $\alpha\in \K$ is such that $\sum_{k=0}^\infty \alpha(\norm{T^k x}) < \infty$ holds for all $x \in X$ (as in the above proof) then, conversely, a function $V:X\to\R_+$ as in Proposition~\ref{prop:UGAS_discrete_systems_Lyapunov-type}\eqref{itm:Ulin-very-weak-Lyapunov-function} can in fact be constructed by means of the formula
	\[
		V(x):=\sum_{k=0}^\infty \alpha(\norm{T^k x}).
	\] 
	For this function $V$ one even has $V(Tx) - V(x) = -\alpha(\norm{x})$ for all $x\in X$.
\end{remark}

We note that condition~\eqref{itm:Ulin-SGC-criteria-5} implies 
Proposition~\ref{prop:UGAS_discrete_systems_and_Small-gain_theorem_Operators_with_Order-uniform-conditions}\eqref{itm:Ulin-SGC-criteria-2} even for nonlinear systems; 
it can be extended also to nonlinear systems with inputs within the input-to-state stability approach, see, e.g., \cite{JiW01} for the finite-dimensional argument, which is however absolutely analogous for infinite-dimensional systems.

If $X$ is a Hilbert space, Lyapunov functions can be constructed by solving the Lyapunov equation:

\begin{proposition}
	\label{prop:Lyapunov-equation} 
	Let $X$ be a Hilbert space. Then $r(T)<1$ if and only if there exists a positive semi-definite operator $Q \in \calL(X)$ satisfying the \emph{Lyapunov equation}
	\begin{eqnarray}
		\label{eq:Lyapunov-equation}
		T^*QT - Q = -I.
	\end{eqnarray}
	In this case, $V:x \mapsto \scalp{Qx}{x}$ is a quadratic Lyapunov function for \eqref{eq:Gamma-discrete-time-system}.
\end{proposition}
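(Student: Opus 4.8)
The plan is to establish the two implications separately and to read off the final assertion along the way.

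\emph{Sufficiency of $\spr(T) < 1$.} By Proposition~\ref{prop:UGAS_discrete_systems_and_Small-gain_theorem_Operators_with_Order-uniform-conditions}, the condition $\spr(T) < 1$ means that the system is uniformly exponentially stable, so there are $M > 0$ and $a \in [0,1)$ with $\norm{T^k} \le M a^k$ for all $k$. I would then set
\[
	Q := \sum_{k=0}^\infty (T^*)^k T^k .
\]
The partial sums form a Cauchy sequence in $\calL(X)$ because $\norm{(T^*)^k T^k} = \norm{T^k}^2 \le M^2 a^{2k}$ is summable, so the series converges in operator norm; each summand is self-adjoint and positive semi-definite, since $\scalp{(T^*)^k T^k x}{x} = \norm{T^k x}^2 \ge 0$, hence $Q \ge 0$, and in fact $Q \ge I$ since the $k = 0$ term is $I$. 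The Lyapunov equation is then an index shift: $T^* Q T = \sum_{k=0}^\infty (T^*)^{k+1} T^{k+1} = \sum_{k=1}^\infty (T^*)^k T^k = Q - I$, i.e.\ $T^*QT - Q = -I$.

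\emph{Necessity.} Conversely, suppose $Q \in \calL(X)$ is positive semi-definite and satisfies $T^*QT - Q = -I$. Put $V(x) := \scalp{Qx}{x} \ge 0$, so $V : X \to \R_+$. A direct computation gives
\[
	V(Tx) - V(x) = \scalp{(T^*QT - Q)x}{x} = -\norm{x}^2
\]
for every $x \in X$. Since $\alpha(r) := r^2$ lies in $\K$, this is exactly condition~\eqref{itm:Ulin-very-weak-Lyapunov-function} of Proposition~\ref{prop:UGAS_discrete_systems_Lyapunov-type}, and that proposition forces $\spr(T) < 1$.

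\emph{The Lyapunov-function claim.} For any positive semi-definite solution $Q$ of the Lyapunov equation we have $Q = I + T^*QT \ge I$ (because $T^*QT \ge 0$), whence $\norm{x}^2 \le V(x) \le \norm{Q}\,\norm{x}^2$; combined with the identity $V(Tx) - V(x) = -\norm{x}^2$ derived above, this shows that $V$ is a Lyapunov function for~\eqref{eq:Gamma-discrete-time-system} in the sense of Definition~\ref{def:Lyapunov-function}, with $\psi_1(r) = r^2$, $\psi_2(r) = \norm{Q} r^2 \in \Kinf$ and $\alpha(r) = r^2 \in \K$. I expect no genuine difficulty here: the only points needing attention are the norm convergence of the series defining $Q$ (supplied by the exponential decay bound) and checking that the hypotheses of the quoted propositions are met verbatim; the verification of the operator identity is a routine reindexing.
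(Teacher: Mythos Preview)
Your proof is correct. The main difference from the paper is that the paper does not prove the equivalence at all but simply cites \cite[Theorem~6.1]{Eisner2010}; your argument is self-contained, constructing $Q = \sum_{k\ge 0}(T^*)^kT^k$ explicitly for one direction and, for the other, feeding the dissipation identity $V(Tx)-V(x)=-\norm{x}^2$ into Proposition~\ref{prop:UGAS_discrete_systems_Lyapunov-type}\eqref{itm:Ulin-very-weak-Lyapunov-function}. That second step is a nice internal reuse of the paper's own machinery and avoids the external reference. For the Lyapunov-function claim the two proofs coincide: the paper derives the same sandwich $\norm{x}^2 \le \scalp{Qx}{x} \le \norm{Q}\norm{x}^2$ from $Q = I + T^*QT$ and the same dissipation identity.
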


\begin{proof}
	The proof of the equivalence can be found for instance in \cite[Theorem~6.1]{Eisner2010}.

	Furthermore, due to the Lyapunov equation~\eqref{eq:Lyapunov-equation} we have
	\begin{eqnarray*}
		\|Q\|\|x\|^2 \geq \scalp{Qx}{x} = \scalp{(T^*QT + I)x}{x} = \|x\|^2 + \scalp{QTx}{Tx} \geq \|x\|^2
	\end{eqnarray*}
	for each $x \in X$. The dissipation inequality is easy to check, too: we have
	\[
		V(Tx) = \scalp{QTx}{Tx}= \scalp{T^*QTx}{x} = \scalp{(Q-I)x}{x} = V(x) - \|x\|^2.
	\]
	Hence, $V$ is indeed a Lyapunov function.
\end{proof}

Since the main purpose of our paper is to study stability of positive systems, we find it worthwhile to point out that, if the underlying space is a Banach lattice and the operator $T$ is positive, then the new norm in the Lyapunov-type condition in Proposition~\ref{prop:UGAS_discrete_systems_Lyapunov-type}(\ref{itm:Ulin-SGC-criteria-4}) can be chosen such that the Banach lattice structure is preserved:

\begin{proposition}
	\label{prop:LFs-LinSys-in-Banach-lattice}
	Let $(X,X^+)$ be a Banach lattice and $T\in \calL(X)$ be a positive operator. 
	The following assertions are equivalent:
	\begin{enumerate}[\upshape (i)]
		\item\label{itm:Ulin-SGC-criteria-BanLat-stability}
		The system~\eqref{eq:Gamma-discrete-time-system} satisfies the equivalent criteria of 
		Proposition~\ref{prop:UGAS_discrete_systems_and_Small-gain_theorem_Operators_with_Order-uniform-conditions}, 
		i.e., we have $\spr(T) < 1$.
	
		\item\label{itm:Ulin-SGC-criteria-BanLat-4} 
		There is an equivalent norm $\norm{\argument}_{\operatorname{equ}}$ on $X$ with respect to which
		$(X,X^+)$ is again a Banach lattice and with respect to which $T$ is a strict contraction.
	\end{enumerate}
\end{proposition}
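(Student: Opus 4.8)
The plan is to derive the non-trivial implication \Implies{itm:Ulin-SGC-criteria-BanLat-stability}{itm:Ulin-SGC-criteria-BanLat-4} by a lattice-aware refinement of the renorming used in the proof of Proposition~\ref{prop:UGAS_discrete_systems_Lyapunov-type}. The converse implication \Implies{itm:Ulin-SGC-criteria-BanLat-4}{itm:Ulin-SGC-criteria-BanLat-stability} is immediate and needs nothing new: the spectral radius does not depend on the choice of an equivalent norm, so if $T$ is a strict contraction with respect to $\norm{\argument}_{\operatorname{equ}}$ then $\spr(T) \le \norm{T}_{\operatorname{equ}} < 1$.

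For \Implies{itm:Ulin-SGC-criteria-BanLat-stability}{itm:Ulin-SGC-criteria-BanLat-4}, assume $\spr(T) < 1$ and fix a real number $s > 1$ with $\spr(sT) = s\,\spr(T) < 1$; then $\norm{(sT)^k} \to 0$, so $C := \sup_{k \in \Z_+} \norm{(sT)^k}$ is finite. Since $T$ is positive, so is every power $(sT)^k$. The candidate norm is
\begin{align*}
	\norm{x}_{\operatorname{equ}} := \sup_{k \in \Z_+} \norm{(sT)^k \modulus{x}} \qquad (x \in X).
\end{align*}
Its value depends on $x$ only through $\modulus{x}$, and $\norm{x} = \norm{\modulus{x}} \le \norm{x}_{\operatorname{equ}} \le C \norm{x}$, so $\norm{\argument}_{\operatorname{equ}}$ is an equivalent norm; in particular $X$ remains complete for it.

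The next step is to check that $\norm{\argument}_{\operatorname{equ}}$ makes $(X,X^+)$ into a Banach lattice and that $T$ becomes a strict contraction. Positive homogeneity of $\norm{\argument}_{\operatorname{equ}}$ is clear. For subadditivity one uses $\modulus{x+y} \le \modulus{x} + \modulus{y}$ and the positivity of $(sT)^k$ to get $0 \le (sT)^k\modulus{x+y} \le (sT)^k\modulus{x} + (sT)^k\modulus{y}$, so the lattice-norm property of the original norm (applied to positive vectors) yields $\norm{(sT)^k\modulus{x+y}} \le \norm{(sT)^k\modulus{x}} + \norm{(sT)^k\modulus{y}}$, and one passes to the supremum over $k$. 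Monotonicity is analogous: $\modulus{x}\le\modulus{y}$ implies $(sT)^k\modulus{x}\le(sT)^k\modulus{y}$, hence $\norm{(sT)^k\modulus{x}}\le\norm{(sT)^k\modulus{y}}$ for every $k$. For the contraction property, $\modulus{Tx}\le T\modulus{x}$ (positivity of $T$) gives, for each $k\in\Z_+$,
\begin{align*}
	(sT)^k\modulus{Tx} \le (sT)^k T\modulus{x} = \tfrac{1}{s}\,(sT)^{k+1}\modulus{x},
\end{align*}
whence $\norm{(sT)^k\modulus{Tx}} \le \tfrac{1}{s}\,\norm{(sT)^{k+1}\modulus{x}} \le \tfrac{1}{s}\,\norm{x}_{\operatorname{equ}}$, and taking the supremum over $k$ yields $\norm{Tx}_{\operatorname{equ}} \le \tfrac{1}{s}\,\norm{x}_{\operatorname{equ}}$ with $\tfrac{1}{s} < 1$.

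The argument is essentially routine once the correct object is written down; the only point that deserves care is that the modulus must be built into the renorming: the naive choice $\sup_k\norm{(sT)^k x}$, which works in Proposition~\ref{prop:UGAS_discrete_systems_Lyapunov-type}, need not be a lattice norm. Inserting $\modulus{x}$ repairs this precisely because $T$, and hence every $(sT)^k$, is positive, so that all the estimates above take place between positive vectors, where the Banach-lattice norm is monotone.
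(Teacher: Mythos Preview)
Your proof is correct and follows essentially the same approach as the paper: both define the new norm by $\norm{x}_{\operatorname{equ}} = \sup_{k \in \Z_+} \norm{(sT)^k \modulus{x}}$ for a suitable $s>1$, and both use $\modulus{Tx}\le T\modulus{x}$ to obtain the strict contraction. You supply more detail than the paper does in verifying the norm axioms and the lattice-norm compatibility~\eqref{eq:compatibility-in-banach-lattices}, and your closing remark on why the modulus is indispensable is a welcome clarification.
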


\begin{proof}
	\Implies{itm:Ulin-SGC-criteria-BanLat-4}{itm:Ulin-SGC-criteria-BanLat-stability}. Again, this is obvious.
	
	\Implies{itm:Ulin-SGC-criteria-BanLat-stability}{itm:Ulin-SGC-criteria-BanLat-4}. The argument is very similar to the proof of the implication from \eqref{itm:Ulin-SGC-criteria-stability} to \eqref{itm:Ulin-SGC-criteria-4} in Proposition~\ref{prop:UGAS_discrete_systems_Lyapunov-type}: We choose $s > 1$ in the same way, but we now define 
	\begin{align*}
		\norm{x}_{\operatorname{equ}} = \sup_{k \in \Z_+} \norm{(sT)^k \modulus{x}}
	\end{align*}
	for each $x \in X$ (note the modulus around $x$). It follows from the positivity of $T$ that $\modulus{Ty} \le T\modulus{y}$ for each $y \in X$; this implies that $T$ is a strict contraction with respect to this new norm. The positivity of $T$ also implies that $(X,X^+)$ is still a Banach lattice with respect to $\norm{\argument}_{\operatorname{equ}}$.
\end{proof}

\subsection{Systems with inputs}
\label{subsection:discrete:inhomogenious}

In this subsection we add an additive input to the discrete-time system~\eqref{eq:Gamma-discrete-time-system}. So for a Banach space $X$, an operator $T \in \calL(X)$ and a mapping $u: \Z_+ \to X$, we consider the system

\begin{align}
	\label{eq:discrete-time-system-with-input}
	x(k+1) = T x(k) + u(k) \quad \text{for all } k \in \Z_+.
\end{align}

We are now interested in the question how the exponential stability of undisturbed system 	\eqref{eq:Gamma-discrete-time-system} is related to the response of the system \eqref{eq:discrete-time-system-with-input} on the inputs from certain function classes.

In the subsequent theorem we use the following notation: for a Banach space $X$ and $p \in [1,\infty]$ we denote by $\ell_p(\Z_+;X)$ the space of sequences $x = (x_n)_{n \in \Z_+}$ in $X$ for which the norm
\begin{align*}
	\norm{x}_p := 
	\begin{cases}
		\left(\sum_{k=0}^\infty \norm{x_n}_X^p \right)^{1/p} \quad & \text{if } p < \infty, \\
		\sup_{k \in \Z_+} \norm{x_n}_X \quad & \text{if } p = \infty,
	\end{cases}
\end{align*}
is finite. Similarly, we use the symbol $c_0(\Z_+; X)$ to denote the space of all sequences in $X$ that converge to $0$; this space is a closed subspace of $\ell_\infty(\Z_+;X)$ and will thus also be endowed with the norm $\norm{\argument}_\infty$.

\begin{theorem}
	\label{thm:discrite-time-convolution}
	Let $X$ be a Banach space and let $T \in \calL(X)$. 
	Fix $p \in [1,\infty)$. The following assertions are equivalent:
	\begin{enumerate}[\upshape (i)]
		\item\label{itm:discrite-time-convolution-stability} 
		\emph{Uniform exponential stability for the system without input:} 
		The system~\eqref{eq:Gamma-discrete-time-system} satisfies the equivalent criteria of 
		Proposition~\ref{prop:UGAS_discrete_systems_and_Small-gain_theorem_Operators_with_Order-uniform-conditions}, 
		i.e., we have $\spr(T) < 1$.
			
		\item\label{itm:discrite-time-convolution-l_p-with-initial-value}
		\emph{Integrable input $\Rightarrow$ integrable state:} 
		For each initial value $x(0) \in X$ and each input $u \in \ell_p(\Z_+; X)$ the solution $x$ to the system~\eqref{eq:discrete-time-system-with-input} is also in $\ell_p(\Z_+; X)$.
		
		\item\label{itm:discrite-time-convolution-l_p-without-initial-value}
		\emph{Integrable input $\Rightarrow$ integrable state for initial value $0$:} 
		For each input $u \in \ell_p(\Z_+; X)$ the solution $x$ to the system~\eqref{eq:discrete-time-system-with-input} with initial value $0$ is also in $\ell_p(\Z_+; X)$.
		
		\item\label{itm:discrite-time-convolution-l_infty-with-initial-value}
		\emph{Bounded input $\Rightarrow$ bounded state:} 
		For each initial value $x(0) \in X$ and each input $u \in \ell_\infty(\Z_+; X)$ the solution $x$ to the system~\eqref{eq:discrete-time-system-with-input} is also in $\ell_\infty(\Z_+; X)$.
		
		\item\label{itm:discrite-time-convolution-l_infty-without-initial-value}
		\emph{Bounded input $\Rightarrow$ bounded state for initial value $0$:} 
		For each input $u \in \ell_\infty(\Z_+; X)$ the solution $x$ to the system~\eqref{eq:discrete-time-system-with-input} with initial value $0$ is also in $\ell_\infty(\Z_+; X)$.
		
		\item\label{itm:discrite-time-convolution-c_0-with-initial-value}
		\emph{Convergent input $\Rightarrow$ convergent state:} 
		For each initial value $x(0) \in X$ and each input $u \in c_0(\Z_+; X)$ the solution $x$ to the system~\eqref{eq:discrete-time-system-with-input} is also in $c_0(\Z_+; X)$.
		
		\item\label{itm:discrite-time-convolution-c_0-without-initial-value}
		\emph{Convergent input $\Rightarrow$ convergent state for initial value $0$:} 
		For each input $u \in c_0(\Z_+; X)$ the solution $x$ to the system~\eqref{eq:discrete-time-system-with-input} with initial value $0$ is also in $c_0(\Z_+; X)$.
		
		\item\label{itm:discrite-time-AG}
		\emph{Asymptotic gain property:} 
		There is $C>0$ such that for each initial value $x(0) \in X$, each input $u \in \ell_\infty(\Z_+; X)$ and each number $\varepsilon > 0$ there exists a time $T>0$ such that the solution $x$ to the system~\eqref{eq:discrete-time-system-with-input} satisfies
		\begin{align*}
			\norm{x(k)} \leq \varepsilon + C \norm{u}_\infty \quad \text{for all } k\geq T.
		\end{align*}
		
		\item\label{itm:discrite-time-ISS}
		\emph{Exponential input-to-state stability:} 
		There are numbers $M>0$, $a\in (0,1)$ and $C>0$ with the following property: for each initial value $x(0) \in X$, each input $u \in \ell_\infty(\Z_+; X)$ and each $k\in\Z_+$ the solution $x$ to the system~\eqref{eq:discrete-time-system-with-input} satisfies
		\begin{align*}
			\norm{x(k)} \leq Ma^k\norm{x(0)} + C\norm{u}_\infty.
		\end{align*}
	\end{enumerate}
\end{theorem}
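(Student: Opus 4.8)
The plan is to read everything off the explicit trajectory formula
\begin{align*}
	x(k) = T^k x(0) + \sum_{j=0}^{k-1} T^{k-1-j} u(j), \qquad k \in \Z_+,
\end{align*}
of the system~\eqref{eq:discrete-time-system-with-input}; the zero--initial--value part $(\calC u)(k) := \sum_{j=0}^{k-1} T^{k-1-j} u(j)$ defines a linear \emph{discrete convolution operator} $\calC$. The nine conditions form four blocks, $\{$\eqref{itm:discrite-time-convolution-l_p-with-initial-value},\eqref{itm:discrite-time-convolution-l_p-without-initial-value}$\}$, $\{$\eqref{itm:discrite-time-convolution-l_infty-with-initial-value},\eqref{itm:discrite-time-convolution-l_infty-without-initial-value}$\}$, $\{$\eqref{itm:discrite-time-convolution-c_0-with-initial-value},\eqref{itm:discrite-time-convolution-c_0-without-initial-value}$\}$ and $\{$\eqref{itm:discrite-time-AG},\eqref{itm:discrite-time-ISS}$\}$, and the strategy is to prove each block equivalent to \eqref{itm:discrite-time-convolution-stability}. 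Inside a block, the version with arbitrary $x(0)$ trivially implies the one with $x(0) = 0$, and \eqref{itm:discrite-time-ISS} trivially implies \eqref{itm:discrite-time-AG}; conversely, \eqref{itm:discrite-time-AG} already forces the state to be bounded for every $x(0)$ (it bounds $\norm{x(k)}$ for all large $k$, and only finitely many earlier values remain, each finite), so \eqref{itm:discrite-time-AG}$\Rightarrow$\eqref{itm:discrite-time-convolution-l_infty-with-initial-value}. Thus it remains to prove that \eqref{itm:discrite-time-convolution-stability} implies all others and that \eqref{itm:discrite-time-convolution-l_p-without-initial-value}, \eqref{itm:discrite-time-convolution-l_infty-without-initial-value} and \eqref{itm:discrite-time-convolution-c_0-without-initial-value} each imply \eqref{itm:discrite-time-convolution-stability}.

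The forward direction is routine estimation. Writing $\norm{T^k} \le M a^k$ with $M > 0$, $a \in [0,1)$: the kernel $(\norm{T^k})_k$ lies in $\ell_1$, so the discrete Young inequality gives $\calC u \in \ell_p(\Z_+;X)$ for $u \in \ell_p(\Z_+;X)$, and together with $(T^k x(0))_k \in \ell_p(\Z_+;X)$ this yields \eqref{itm:discrite-time-convolution-l_p-with-initial-value}; the bound $\norm{x(k)} \le M a^k \norm{x(0)} + \tfrac{M}{1-a}\norm{u}_\infty$ yields \eqref{itm:discrite-time-convolution-l_infty-with-initial-value} and \eqref{itm:discrite-time-ISS}, hence also \eqref{itm:discrite-time-AG} (take $C = \tfrac{M}{1-a}$ and pick the time so that $M a^k \norm{x(0)} \le \varepsilon$); and splitting the convolution sum at $j = \lfloor k/2 \rfloor$ shows $(\calC u)(k) \to 0$ when $u(k) \to 0$ -- the ``old'' part is suppressed by the geometric decay of $T^{k-1-j}$ and the ``recent'' part by the smallness of $u(j)$ -- so with $T^k x(0) \to 0$ we obtain \eqref{itm:discrite-time-convolution-c_0-with-initial-value}.

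For the three reverse implications: under \eqref{itm:discrite-time-convolution-l_p-without-initial-value}, \eqref{itm:discrite-time-convolution-l_infty-without-initial-value} or \eqref{itm:discrite-time-convolution-c_0-without-initial-value} the operator $\calC$ maps the corresponding Banach sequence space $Y$ into itself, and its graph is closed (convergence in $Y$ forces coordinatewise convergence, and each coordinate of $\calC u$ depends linearly and continuously on finitely many coordinates of $u$), so $\calC \in \calL(Y)$ by the closed graph theorem. In the $\ell_p$ case, feeding in $u = (y,0,0,\dots)$ gives $\sum_{k \ge 0} \norm{T^k y}^p \le \norm{\calC}^p \norm{y}^p < \infty$ for every $y \in X$, which is condition~\eqref{itm:discrete-time-strong-rates:integrability} of Proposition~\ref{prop:discrete-time:strong-rates} and therefore forces $\spr(T) < 1$. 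In the $\ell_\infty$ and $c_0$ cases, $u = (y,0,0,\dots)$ only gives power boundedness of $T$, hence $\spr(T) \le 1$; assuming $\spr(T) \ge 1$ we get $\spr(T) = 1$ and may pick $\lambda_0 \in \sigma(T)$ with $\modulus{\lambda_0} = 1$. Passing to the complexification, $\calC$ extends to a bounded operator on $\ell_\infty(\Z_+;X_\C)$, resp.\ $c_0(\Z_+;X_\C)$; applying it to the finitely supported ``modulated'' inputs $u_n(j) := \lambda_0^j v$ for $0 \le j \le n$ and $u_n(j) := 0$ for $j > n$ (of norm $\norm{v}$), and evaluating at time $n+1$, gives
\begin{align*}
	\norm{\sum_{i=0}^{n} (\bar\lambda_0 T)^i v} = \norm{(\calC u_n)(n+1)} \le \norm{\calC}\, \norm{v}
\end{align*}
for all $n$ and all $v$, so the partial sums of $\sum_i (\bar\lambda_0 T)^i$ are uniformly bounded. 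By Abel summation (using $\spr(\bar\lambda_0 T) = \spr(T) = 1$) this makes $z \mapsto (\id - z\bar\lambda_0 T)^{-1} = \sum_{i \ge 0} z^i (\bar\lambda_0 T)^i$ uniformly bounded on $(0,1)$; but $1 \in \sigma(\bar\lambda_0 T)$ forces $\norm{(\id - z\bar\lambda_0 T)^{-1}} = \tfrac{1}{z}\norm{(\tfrac1z \id - \bar\lambda_0 T)^{-1}} \ge \tfrac{1}{1-z} \to \infty$ as $z \to 1^-$, a contradiction. Hence $\spr(T) < 1$.

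I expect this last step to be the only genuine obstacle. The point is that the constant input $u \equiv y$ only controls the partial sums of $\sum_i T^i$ and thus only detects a possible spectral value at $1$; to detect an arbitrary spectral value on the unit circle one must rotate it to $1$ by the modulation $u_n(j) = \lambda_0^j v$, which is what forces the use of the complexification and, in the $c_0$ case, of finitely supported -- rather than merely bounded -- test inputs. Everything else is bookkeeping plus the cited discrete Datko--Pazy statement (Proposition~\ref{prop:discrete-time:strong-rates}).
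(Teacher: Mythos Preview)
Your argument is correct. The only substantive difference from the paper lies in the implication ``\eqref{itm:discrite-time-convolution-l_infty-without-initial-value} or \eqref{itm:discrite-time-convolution-c_0-without-initial-value} $\Rightarrow$ \eqref{itm:discrite-time-convolution-stability}''. You establish power boundedness, then pass to the complexification, feed in the modulated finitely supported inputs $u_n(j)=\lambda_0^j v$, and use Abel summation to bound the resolvent of $\bar\lambda_0 T$ uniformly near $1$, contradicting $1\in\sigma(\bar\lambda_0 T)$. The paper avoids both complexification and spectral considerations by a purely real trick: after obtaining power boundedness from $u=(y,0,0,\dots)$, it applies the bounded convolution operator to the finitely supported input $v(j)=T^j y$ for $j\le k$, which collapses to $(k+1)T^k y$; combined with power boundedness this gives $(k+1)\norm{T^k}\le \norm{S}^2$, hence $\norm{T^k}\to 0$ directly. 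The paper's route is shorter and stays within the real space, while yours has the merit of making the spectral obstruction on the unit circle explicit. A second minor difference: the paper closes the last block via \eqref{itm:discrite-time-AG} $\Rightarrow$ \eqref{itm:discrite-time-convolution-c_0-without-initial-value} using the cocycle property, whereas you take the (equally valid) shortcut \eqref{itm:discrite-time-AG} $\Rightarrow$ \eqref{itm:discrite-time-convolution-l_infty-with-initial-value}.
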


Theorem~\ref{thm:discrite-time-convolution}
 shows in particular that the question whether assertions~\eqref{itm:discrite-time-convolution-l_p-with-initial-value} and~\eqref{itm:discrite-time-convolution-l_p-without-initial-value} hold does not depend on the choice of $p$. 

Some of the implications in Theorem~\ref{thm:discrite-time-convolution} are known, while others seem to be folklore knowledge at best. For instance, the equivalence of~\eqref{itm:discrite-time-convolution-stability}, \eqref{itm:discrite-time-convolution-l_p-without-initial-value}, \eqref{itm:discrite-time-convolution-l_infty-without-initial-value} and~\eqref{itm:discrite-time-convolution-c_0-without-initial-value} can be found for continuous-time systems (i.e., for $C_0$-semigroups) in \cite[Theorem~5.1.2]{ArendtBattyHieberNeubrander2011}, but we do not know any reference where this is stated (and proved) for the discrete-time case. Thus, both for the sake of completeness and for the convenience of the reader, we give a complete proof of Theorem~\ref{thm:discrite-time-convolution}. Several of the subsequent arguments are close to those in the proof of \cite[Theorem~5.1.2]{ArendtBattyHieberNeubrander2011}. The equivalence of a \q{nonlinear monotone version} of the asymptotic gain property and the input-to-state stability was shown in \cite[Theorem 1]{RuS14} for finite-dimensional nonlinear monotone discrete-time systems with a continuous right-hand side.

\begin{proof}[Proof of Theorem~\ref{thm:discrite-time-convolution}]
	Throughout the proof we use the abbreviation $\calT := (T^k)_{k \in \Z_+}$, and for each sequence $u: \Z_+ \to X$ we define the convolution $\calT * u: \Z_+ \to X$ by the formula
	\begin{align*}
		(\calT * u)(k) = \sum_{j=0}^k T^{k-j}u(j) = \sum_{j=0}^k T^j u(k-j) \qquad \text{for all } k \in \Z_+.
	\end{align*}
	Then the solution $x$ to the system~\eqref{eq:discrete-time-system-with-input} with input $u$ and initial value $x(0)$ is given by the formula
	\begin{align}
		\label{eq:solution-formula}
		x(k+1) = T^{k+1} x(0) + (\calT * u)(k) \qquad \text{for all } k \in \Z_+.
	\end{align}
	Now we can prove the claimed equivalences: we are going to show 
	\begin{itemize}
		\item 
		first ``\eqref{itm:discrite-time-convolution-stability} 
		$\Rightarrow$ \eqref{itm:discrite-time-convolution-l_p-with-initial-value} 
		$\Rightarrow$ \eqref{itm:discrite-time-convolution-l_p-without-initial-value} 
		$\Rightarrow$ \eqref{itm:discrite-time-convolution-stability}'',
		
		\item 
		then ``\eqref{itm:discrite-time-convolution-stability} 
		$\Rightarrow$ \eqref{itm:discrite-time-convolution-l_infty-with-initial-value} 
		$\Rightarrow$ \eqref{itm:discrite-time-convolution-l_infty-without-initial-value}'',
		
		\item 
		then ``\eqref{itm:discrite-time-convolution-stability} 
		$\Rightarrow$ \eqref{itm:discrite-time-convolution-c_0-with-initial-value} 
		$\Rightarrow$ \eqref{itm:discrite-time-convolution-c_0-without-initial-value}'',
		
		\item 
		then ``\eqref{itm:discrite-time-convolution-l_infty-without-initial-value} 
		or \eqref{itm:discrite-time-convolution-c_0-without-initial-value} 
		$\Rightarrow$ \eqref{itm:discrite-time-convolution-stability}'', 
		
		\item 
		and finally ``\eqref{itm:discrite-time-convolution-stability} 
		$\Rightarrow$ \eqref{itm:discrite-time-ISS} 
		$\Rightarrow$ \eqref{itm:discrite-time-AG} 
		$\Rightarrow$ \eqref{itm:discrite-time-convolution-c_0-without-initial-value}''.
	\end{itemize}
	
	\Implies{itm:discrite-time-convolution-stability}{itm:discrite-time-convolution-l_p-with-initial-value}
	Let $u \in \ell_p(\Z_+;X)$. Since $\norm{T^k}$ decays exponentially, the first summand in the solution formula~\eqref{eq:solution-formula} is in $\ell_p(\Z_+;X)$. 

	As $\left(\norm{T^k}\right)_{k \in \Z_+}  \in \ell_1$, it follows from Young's inequality for convolutions of scalar-valued sequences (see \cite[Proposition 1.3.2]{ArendtBattyHieberNeubrander2011} for an analogous result for measurable functions) that
	\[
		\norm{\big(\norm{T^k}\big)_{k \in \Z_+} * \big(\norm{u(k)}\big)_{k \in \Z_+}}_{\ell_p}
		\leq
		\norm{\big(\norm{T^k}\big)_{k \in \Z_+}}_{\ell_1}
		\norm{u}_{\ell_p(\Z_+;X)},
	\]	
	which implies that $\calT * u \in \ell_p(\Z_+; X)$ .

	\Implies{itm:discrite-time-convolution-l_p-with-initial-value}{itm:discrite-time-convolution-l_p-without-initial-value} 
	This implication is obvious.
	
	\Implies{itm:discrite-time-convolution-l_p-without-initial-value}{itm:discrite-time-convolution-stability}
	Fix $y \in X$. We are going to show that the orbit $(T^k y)_{k \in \Z_+}$ is in $\ell_p(\Z_+; X)$, 
	which implies assertion~\eqref{itm:discrite-time-convolution-stability} according to Proposition~\ref{prop:discrete-time:strong-rates}\eqref{itm:discrete-time-strong-rates:integrability}.
	
	To this end, set $u := (y,0,0,\dots) \in \ell_p(\Z_+; X)$. Assertion~\eqref{itm:discrite-time-convolution-l_p-without-initial-value} implies that $\calT * u \in \ell_p(\Z_+; X)$, too. However, for each time $k \in \Z_+$, we have
	\begin{align*}
		(\calT * u)(k) = T^k y,
	\end{align*}
	so $(T^k y)_{k \in \Z_+}$ is indeed in $\ell_p(\Z_+; X)$.
		
	\Implies{itm:discrite-time-convolution-stability}{itm:discrite-time-convolution-l_infty-with-initial-value}
	This implication follows easily from the solution formula~\eqref{eq:solution-formula} and from the exponential decay of $\norm{T^k}$.
	
	\Implies{itm:discrite-time-convolution-l_infty-with-initial-value}{itm:discrite-time-convolution-l_infty-without-initial-value}
	This implication is obvious.
		
	\Implies{itm:discrite-time-convolution-stability}{itm:discrite-time-convolution-c_0-with-initial-value}
	This implication follows from the solution formula~\eqref{eq:solution-formula}, the exponential decay of $\norm{T^k}$ and the fact that the convolution of a scalar-valued $\ell_1$-sequence with a scalar-valued $c_0$-sequence is again in $c_0$.
	
	\Implies{itm:discrite-time-convolution-c_0-with-initial-value}{itm:discrite-time-convolution-c_0-without-initial-value}
	This implication is obvious.
	
	``\eqref{itm:discrite-time-convolution-l_infty-without-initial-value} or \eqref{itm:discrite-time-convolution-c_0-without-initial-value} $\Rightarrow$ \eqref{itm:discrite-time-convolution-stability}''	
	Assume that property~\eqref{itm:discrite-time-convolution-l_infty-without-initial-value} or~\eqref{itm:discrite-time-convolution-c_0-without-initial-value} holds; we set $E := \ell_\infty(\Z_+;X)$ in the former case, and $E = c_0(\Z_+;X)$ in the latter. Then the linear mapping $S: E \to E$, $u \mapsto \calT * u$ is well-defined, and it is bounded due to the closed graph theorem. 
	
	This has two consequences: as a first consequence, we obtain the estimate $\norm{T^k} \le \norm{S}$ for each $k \in \Z_+$; this follows by applying the inequality $\norm{Su}_\infty \le \norm{S} \norm{u}_\infty$ to all sequences of the form $u = (y,0,0,\dots) \in E$ (where $y \in X$). In particular, $T$ is power-bounded.
	
	The second consequence of the boundedness of $S$ is as follows. Let us fix an integer $k \in \Z_+$. Then each mapping $v: \{0,1,\dots,k\} \to X$ can be extended by zeros to a sequence in $E$, and hence obeys the estimate
	\begin{align*}
		\norm{\sum_{j=0}^k T^j v(k-j)} \le \norm{S} \norm{v}_\infty.
	\end{align*}
	For any fixed vector $y \in X$, we apply this estimate to the mapping $v: \{0,1,\dots, k\} \ni j \mapsto T^j y \in X$, and thus obtain
	\begin{align*}
		(k+1) \norm{T^k y} \le \norm{S} \sup_{j=0,\dots, k} \norm{T^j} \norm{y}.
	\end{align*}
	Since $y \in X$ and $k \in \Z_+$ were arbitrary, this shows that
	\begin{align*}
		(k+1) \norm{T^k} \le \norm{S} \sup_{j=0,\dots, k} \norm{T^j} \qquad \text{for each } k \in \Z_+.
	\end{align*}
	But we already know that $T$ is power-bounded, so it follows from the last inequality that $\norm{T^k} \to 0$ as $k \to \infty$.
		
	\Implies{itm:discrite-time-convolution-stability}{itm:discrite-time-ISS}
	This follows readily from the exponential decay of $\norm{T^k}$ and the solution formula~\eqref{eq:solution-formula}; just set $C := \sum_{k=0}^\infty \norm{T^k}$.

	\Implies{itm:discrite-time-ISS}{itm:discrite-time-AG} This is clear.
	
	\Implies{itm:discrite-time-AG}{itm:discrite-time-convolution-c_0-without-initial-value} 
	To indicate the dependence of the solution of~\eqref{eq:discrete-time-system-with-input} on the initial value and the input, we denote the solution at time $k$ by $\phi_k^u (x_0)$, where $x_0$ is the initial value and $u$ is the input.
	
	Now, fix $u \in c_0(\Z_+; X)$. We have to show that the sequence $k \mapsto \phi_k^u(0)$ is in $c_0(\Z_+; X)$, and in order to do so, let $\varepsilon > 0$. 
	
	Since $u \in c_0(\Z_+; X)$, there exists a time $k_0 \in Z_0$ such that $C \norm{u(\argument + k_0)}_\infty \leq \varepsilon$. For all times $j \in \Z_+$ we obtain, by the cocycle property,
	\begin{align*}
		\norm{ \phi_{j+k_0}^u (0) } 
		= \norm{ \phi_j^{u(\argument + k_0)} \big(\phi_{k_0}^u(0)\big) }.
	\end{align*}
	Due to the asymptotic gain property, the term on the right hand is, for all sufficiently large $j$, dominated by
	\begin{align*}
		\varepsilon + C \norm{u(\argument + k_0)}_\infty \le 2 \varepsilon.
	\end{align*}
	Hence, the sequence $k \mapsto \phi_k^u(0)$ is indeed in $c_0(\Z_+; X)$.
\end{proof}

\begin{remark}
	\label{rem:Importance:B=I} 
	The equivalence between $\spr(T)<1$ and exponential input-to-state stability of \eqref{eq:discrete-time-system-with-input} holds also if we substitute $u$ by $Bu$, where $B$ is a bounded linear operator from a Banach space $U$ to $X$, and $u: \Z_+\to U$. 
	
	But this is not the case for the other equivalences. For instance, for $B=0$ the property \eqref{itm:discrite-time-convolution-c_0-without-initial-value} reduces to the fact that $0$ is an equilibrium point of the undisturbed system, which always holds for linear systems.
The asymptotic gain property \eqref{itm:discrite-time-AG} reduces for $B=0$ to strong stability.
\end{remark}

\subsection{Stability radii}

Having a system with inputs, one can consider the robustness of the stability with respect to structured disturbances of the dynamics of the system. A prominent role in this respect was played by the concepts of complex, real and positive stability radii \cite{HiP90}.
A simple formula in terms of transfer functions for the computation of the complex stability radius for linear infinite-dimensional discrete-time systems was shown in \cite[Corollary~4.5]{WiH94}.
If the system and the structured perturbations are positive, this formula simplifies even further \cite[Theorem~3.10]{Fis97}, and moreover, for positive systems with positive structured disturbances complex, real and positive stability radii coincide \cite[Theorem~3.6]{Fis97}.
The results in \cite{Fis97} have been extended to so-called multi-perturbations in \cite{AnS08}.
In \cite{KHP06} the characterizations of spectral value sets and stability radii are obtained for linear systems with structured perturbations. These characterizations have been applied to study so-called connective stability for large-scale systems.
As a general reference for linear uncertain systems we refer to \cite[Chapter 5]{HiP11}.

\subsection*{Acknowledgements} 

A. Mironchenko is supported by the German Research Foundation (DFG)
via the grant MI 1886/2-1.

\subsection*{Data availability statement} 

Data sharing not applicable to this article as no datasets were generated or analysed during the current study.

\bibliographystyle{abbrv}
\bibliography{literature}

\end{document}